\theoremstyle{plain}
\newtheorem{theorem}{Theorem}
\newtheorem{lemma} [theorem] {Lemma}
\newtheorem{remark}[theorem]{Remark}
\def\diam{\hskip0.02cm{\rm diam}\hskip0.01cm}
\newcommand{\WOT}{{\rm WOT}}
\newtheorem{corollary}[theorem]{Corollary}
\newtheorem{proposition}[theorem]{Proposition}
\numberwithin{theorem}{section} \numberwithin{equation}{section}
\newcommand{\E}{\mathcal{E}}
\newcommand{\X}{\mathcal{X}}
\newcommand{\D}{\mathcal{D}}
\newcommand{\cH}{\mathcal{H}}
\newcommand{\M}{\mathcal{M}}
\newcommand{\s}{\subset}
\newcommand{\B}{\mathcal{B}}
\begin{document}

\title{Unitarizable representations and fixed points of groups of biholomorphic
transformations of operator balls}

\author{M.\,I.~Ostrovskii\\
Department of Mathematics and Computer Science\\
St. John's University\\
8000 Utopia Parkway\\
Queens, NY 11439\\
USA\\
e-mail: {\tt ostrovsm@stjohns.edu} \and\\
V.\,S.~Shulman\\
Department of Mathematics\\
Vologda State Technical University\\
15 Lenina street\\
Vologda 160000\\
RUSSIA\\
e-mail: {\tt shulman\_v@yahoo.com}
\and\\ L.~Turowska\\
Department of Mathematical Sciences\\
Chalmers University of Technology and University of Gothenburg\\
SE-41296, Gothenburg\\SWEDEN\\
e-mail: {\tt turowska@chalmers.se}}

\maketitle


\noindent{\bf Abstract.} We show that the open unit ball of the
space of operators from a finite dimensional Hilbert space into a
separable Hilbert space (we call it ``operator ball'') has a
restricted form of normal structure if we endow it with a
hyperbolic metric (which is an analogue of the standard hyperbolic
metric on the unit disc in the complex plane). We use this result
to get a fixed point theorem for groups of biholomorphic
automorphisms of the operator ball. The fixed point theorem is
used to show that a bounded representation in a separable Hilbert
space which has an invariant indefinite quadratic form with
finitely many negative squares is unitarizable (equivalent to a
unitary representation). We apply this result to find dual pairs
of invariant subspaces in Pontryagin spaces. In the appendix we
present results of Itai Shafrir about hyperbolic metrics on the
operator ball.
\bigskip

\noindent{\bf Keywords.} Hilbert space, bounded representation,
unitary representation, hyperbolic spa\-ce, fixed point, normal
structure, biholomorphic transformation, indefinite quadratic form
\bigskip

\noindent{\bf 2000 Mathematics Subject Classification:}  47H10,
47B50, 22D10

\section{Introduction}\label{S:Introduction}

Let $K,H$ be Hilbert spaces; by $L(K,H)$ we denote the Banach
space of all linear bounded operators from $K$ to $H$. We will
denote the open unit ball of $L(K,H)$ by $\B$ and call it {\it
operator ball}. We say that a subset $M$ of $\B$ is {\it separated
from the boundary} if it is contained in a ball $r\B$, for some
$r\in [0, 1)$.

A group $G$ of transformations of $\B$ will be called {\it elliptic} if all its orbits are separated from the
boundary (this terminology goes back to \cite{Helt}).

We call  $G$ {\it equicontinuous} if, for each $\varepsilon>0$ there is $\delta>0$ such that if $A,B\in \B$ and
$\|A-B\|< \delta$ then $\|g(A)-g(B)\|< \varepsilon$ for all $g\in G$. This condition can be also called {\it
global equicontinuity} because it is possible also to consider equicontinuity in a point.

Since $\B$ is a bounded open set of a Banach space, one may
consider holomorphic maps from $\B$ to Banach spaces. We will deal
with invertible holomorphic maps from $\B$ onto $\B$; such maps
are called {\it biholomorphic automorphisms} of $\B$. Our aim is
to prove that if one of the spaces $K, H$ is finite-dimensional
and the other is separable, then any elliptic group of
biholomorphic automorphisms of $\B$ has a common fixed point. More
precisely we will prove the following result.

\begin{theorem}\label{main}
Let $\dim K < \infty$ and $H$ be separable. For a group $G$ of
biholomorphic automorphisms of $\B$, the following statements are
equivalent:

{\rm (i)} $G$ is elliptic on $\B$;

{\rm (ii)} at least one orbit of $G$ is separated from the boundary;

{\rm (iii)} $G$ is equicontinuous;

{\rm (iv)} $G$ has a common fixed point in $\B$.

\end{theorem}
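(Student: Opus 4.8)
The plan is to treat the equivalence of (i), (ii), (iii) by elementary estimates on the explicit form of the biholomorphic automorphisms of $\B$, and to obtain (iv) by invoking the hyperbolic metric on $\B$ together with the restricted normal-structure property announced in the abstract. First I record the structure of the automorphism group: every biholomorphic automorphism $g$ of $\B$ has the form $g(T)=(aT+b)(cT+d)^{-1}$ for a pseudo-unitary block operator $\bigl(\begin{smallmatrix}a&b\\c&d\end{smallmatrix}\bigr)$ on $H\oplus K$; the stabilizer of $0$ consists exactly of the surjective linear isometries $S\colon T\mapsto UTW$ with $U\in U(H)$, $W\in U(K)$; and every $g$ factors as $g=\varphi_{g(0)}\circ S_g$, where $S_g$ fixes $0$ and $\varphi_B(T)=(I-BB^{*})^{-1/2}(T+B)(I+B^{*}T)^{-1}(I-B^{*}B)^{1/2}$ is the Möbius translation carrying $0$ to $B$. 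The one lemma needed at this stage is elementary: for $\|B\|<1$ the map $\varphi_B$ is Lipschitz on the \emph{closed} ball $\overline{\B}$, with a constant $L(\|B\|)$ depending only on $\|B\|$ (and blowing up as $\|B\|\to1$); this follows at once from the formula together with the bound $\|(I+B^{*}T)^{-1}\|\le(1-\|B\|)^{-1}$, valid for all $T\in\overline{\B}$ because $\|B^{*}T\|\le\|B\|<1$. Composing, any $g$ with $g(x_0)=y_0$ equals $\varphi_{y_0}\circ S\circ\varphi_{-x_0}$ for a linear isometry $S$, hence is bi-Lipschitz on $\overline{\B}$ with constant controlled by $\|x_0\|$ and $\|y_0\|$ alone.

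Granting this, (i)$\Leftrightarrow$(ii)$\Leftrightarrow$(iii) is routine. The implication (i)$\Rightarrow$(ii) is trivial. For (ii)$\Rightarrow$(iii): if $Gx_0\subseteq r\B$, the factorization with $x_0$ fixed and $y_0=g(x_0)\in r\B$ shows that all $g\in G$ are Lipschitz on $\B$ with one common constant, hence $G$ is (uniformly) equicontinuous. For (iii)$\Rightarrow$(ii): precomposition with linear isometries preserves equicontinuity, so the family $\{\varphi_{g(0)}=g\circ S_g^{-1}\colon g\in G\}$ of Möbius translations is equicontinuous; but after conjugating by linear isometries one may assume the translating operator is diagonal with respect to fixed orthonormal bases with largest diagonal entry equal to its norm, and then the restriction of $\varphi_B$ to the scalar multiples of the corresponding rank-one operator is the classical disc automorphism $z\mapsto(z+t)(1+tz)^{-1}$ with $t=\|B\|$; such a family can be equicontinuous only when $\sup t<1$, so $\sup_{g\in G}\|g(0)\|<1$, which is (ii). For (ii)$\Rightarrow$(i): each $g\in G$ and, since $g^{-1}\in G$ with $g^{-1}(x_0)\in Gx_0$, also $g^{-1}$ is bi-Lipschitz on $\overline{\B}$ with one common constant $L$; such a $g$ extends to a bi-Lipschitz homeomorphism of $\overline{\B}$ carrying $\partial\B$ onto itself, so $1-\|g(y)\|=\operatorname{dist}(g(y),\partial\B)\ge L^{-1}\operatorname{dist}(y,\partial\B)=L^{-1}(1-\|y\|)$ for every $y\in\B$, and every orbit lies in an interior ball.

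It remains to bring in (iv). The implication (iv)$\Rightarrow$(i) is easy: conjugating $G$ by a biholomorphic automorphism carrying a common fixed point to $0$ --- which preserves (i), since any fixed automorphism and its inverse are Lipschitz on $\overline{\B}$ by the lemma --- we may assume $G$ fixes $0$; then $G$ consists of linear isometries $T\mapsto UTW$, which preserve the operator norm, so every orbit lies on a sphere $\{\|T\|=\mathrm{const}\}\subseteq\B$. The substantive implication is (ii)$\Rightarrow$(iv), the existence of a common fixed point. Here one passes to the hyperbolic metric $\rho$ on $\B$, with respect to which every biholomorphic automorphism is an isometry. Since (ii) holds, every orbit is separated from the boundary, hence $\rho$-bounded; using $\dim K<\infty$ and separability of $H$, a ball $r\B$ is weakly compact and weakly metrizable, and a $\rho$-closed, $\rho$-bounded, $\rho$-convex subset of $\B$ is weakly compact, so a Zorn's-lemma argument yields a minimal nonempty $\rho$-closed, $\rho$-convex, $G$-invariant set $C$. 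The group acts on $(C,\rho)$ by isometries, and the restricted normal structure of $(\B,\rho)$ is precisely what forces $C$ to be a single point (Kirk's argument: if $\diam C>0$, a nondiametral point of $C$ produces a proper invariant $\rho$-convex subset), giving the common fixed point.

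The crux, and the main obstacle, is this last implication. In the infinite-dimensional operator ball the hyperbolic metric is not $\mathrm{CAT}(0)$ and has no genuine circumcentre, so one cannot simply average the orbit; the work lies in (a) formulating the correct weakened --- ``restricted'' --- normal-structure property and the matching notion of $\rho$-convex hull that is stable under the group, (b) proving that property for $(\B,\rho)$, which is the technical heart and the point where $\dim K<\infty$ is genuinely used beyond the weak-compactness bookkeeping, and (c) carrying Kirk's minimal-invariant-set scheme through for a group of $\rho$-isometries on a weakly compact set, with convexity read hyperbolically rather than linearly.
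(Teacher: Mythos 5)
Your overall architecture matches the paper's: the peripheral implications come from the factorization $g=M_{g(0)}\circ(\text{linear isometry})$ (Harris) plus Lipschitz estimates for the M\"obius maps, and the substantive implication is reduced to a Kirk-type minimal-invariant-set argument for the hyperbolic metric $\rho$. Some of your routes differ in detail and are sound: you get (ii)$\Rightarrow$(i) from a uniform bi-Lipschitz estimate on $\overline{\B}$, where the paper gets it in one line from the invariance of $\rho$ and the triangle inequality; your (iii)$\Rightarrow$(ii), restricting $M_B$ to the slice through the top singular vector of $B=g(0)$ so as to see the one-variable disc automorphism with parameter $\|B\|$, is essentially the paper's computation with the spectral projection of $A^*A$ onto its top eigenvalue (note the slice is affine rather than linear: $M_B(zE_1)=\frac{z+t_1}{1+t_1z}E_1+\sum_{i\ge2}t_iE_i$ when $B$ is diagonal, but the differences still lie on the $E_1$-line, so the conclusion $\sup_g\|g(0)\|<1$ stands).

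The gap is in (ii)$\Rightarrow$(iv), and it is twofold. First, you assert that a $\rho$-closed, $\rho$-bounded, $\rho$-convex subset of $\B$ is weakly compact. Norm-closedness plus $\rho$-convexity does not yield weak closedness (Mazur's theorem needs \emph{linear} convexity, and $\rho$-segments are not line segments), so this step is unjustified as stated. The paper sidesteps it by running Zorn over intersections of closed $\rho$-balls containing an orbit: each ball $E_{A,r}=M_A(E_{0,\tanh r})$ is WOT-compact because $M_A$ is WOT-continuous (Lemma \ref{WOT} --- itself a genuine use of $\dim K<\infty$, via the norm-continuity of $X\mapsto(1+A^*X)^{-1}$) and $E_{0,\tanh r}$ is WOT-compact by Banach--Alaoglu, while $\rho$-convexity of balls follows from convexity of the metric. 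Second, and decisively, the whole implication rests on the restricted normal structure of $(\B,\rho)$ --- that a non-singleton WOT-compact $\rho$-convex set has a non-diametral point --- which you explicitly flag as ``the crux'' and do not prove. This is Theorem \ref{T:non-diam}, and its proof is the technical heart of the paper: from a diametral sequence (Lemma \ref{perif}) one extracts a WOT-limit, normalizes it to $0$ by a M\"obius map, uses $\dim K<\infty$ to pass to a norm-convergent subsequence $A_n^*A_n\to P$ with $\|P\|=\beta^2=\tanh^2(\diam M)$, and shows $\lim_n\|M_{A_m}(-A_n)\|^2$ is close to $\|Q+(1-Q)Q\|\ge 2\beta^2-\beta^4>\beta^2$, contradicting diametrality. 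Without this computation (and without the hyperbolicity of $(\B,\rho)$ itself, proved in the appendix), the proposal establishes the elementary equivalences and the reduction, but not the fixed-point statement.
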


\begin{remark} The assumption $\dim K < \infty$ is essential, some
of the results of this paper are known to fail without it, see,
for example, the last paragraph of Section \ref{S:orthog}. As for
separability of $H$, it is just a technical convenience, our
approach works for non-separable $H$ also, with a bit more
complicated proofs.
\end{remark}

The result will be applied to the orthogonalization (or similarity) problem for bounded group representations on
Hilbert spaces. This problem can be formulated as follows. Let $\pi$ be a representation of a group $G$ on a
Hilbert space $\cH$. Under which conditions there is an invertible operator $V$ such that the representation
$\sigma$ of $G$, defined by the formula $\sigma(g) = V\pi(g)V^{-1}$, is unitary?

Clearly a necessary condition is the boundedness of $\pi$: $\sup_{g\in G}\|\pi(g)\|< \infty$. In general it is
not sufficient. Some sufficient conditions (on $G$ or $\pi$) are known, see the book \cite{Pis01}.
  We will show that a bounded representation
$\pi$ of a group $G$ on a Hilbert space $\cH$ is similar to a
unitary representation if it preserves a quadratic form $\eta$
with finite number of negative squares. The last condition means
that  $\eta(x) = \|Px\|^2 - \|Qx\|^2$ and $P,Q$ are orthogonal
projections in $\cH$ with $P+Q = 1$ and $\dim(Q\cH) < \infty$.
\medskip

As a consequence we obtain that each bounded group of $J$-unitary
operators on a Pontryagin space $\Pi_k$ has an invariant dual pair
of subspaces. In other words the space can be decomposed into
$J$-orthogonal direct sum  $H_+ + H_-$ of positive and negative
subspaces which are invariant for all operators in the group.
\medskip

The  proof of Theorem \ref{main} is based on the analysis of the
structure of the operator ball as a metric space with respect to
the Carath\'eodory distance (see Chapters 4 and 5 of
\cite{Vesent}). It was proved by Shafrir \cite{Sha} that $\B$ is a
hyperbolic space with respect to this distance. Since \cite{Sha}
is not easily accessible, we present a proof of this result in an
appendix to our paper, with the kind permission of the author. We
will show that $\B$ has a restricted form of a {\it normal
structure} if $\dim(K) < \infty$.

In the case where $K$ is one-dimensional Theorem \ref{main} was
obtained in \cite{Shul-80}; a transparent proof can be found in
\cite[Section 23]{Kiss-Shul}.

\section{Hyperbolic spaces}

In our definition of hyperbolic spaces we follow fixed point
theory literature (see e.g. \cite{RS90}, \cite{RZ01}). In
geometric literature (see e.g. \cite{BH99}) hyperbolic spaces are
defined differently.
\medskip

By a {\it line} in a metric space $(\X,\rho)$ we mean a subset of $\X$ which is isometric to the real line
$\mathbb{R}$ with its usual metric (in the literature lines are also called {\it metric lines} or {\it geodesic
lines}).
\medskip

Let $(\X,\rho)$ be a metric space with a distinguished set $\M$ of lines. We say that $\X$ is a {\it hyperbolic
space} if the following conditions are satisfied:
\medskip

\noindent{\bf (1)} (Uniqueness of a distinguished line through a given pair of points) For each $x,y\in \X$,
there is exactly one line $\ell\in\M$ containing both $x$ and $y$.
\medskip

\noindent{\bf (2)} (Convexity of the metric) To state the
condition (see \eqref{hyperb1}) we need to introduce some more
definitions and notation. The {\it segment} $[x,y]$ is defined as
the part of the line $\ell\in\M$ containing both $x$ and $y$,
consisting of all $z\in\ell$ satisfying
\begin{equation}\label{otrezok}
\rho(x,y) = \rho(x,z)+\rho(z,y). \end{equation} We write
\begin{equation}\label{otr1}
z=(1-t)x\oplus ty
\end{equation}
if $z\in[x,y]$, $\rho(z,x) = t\rho(x,y)$, and $\rho(z,y) =
(1-t)\rho(x,y)$ (where $t\in[0,1]$).

The convexity condition is:
\begin{equation}\label{hyperb1} \rho\left(\frac{1}{2}x\oplus\frac{1}{2}y,\frac{1}{2}x\oplus\frac{1}{2}z\right)\le
\frac{1}{2}\rho(y,z).
\end{equation}

Hyperbolic spaces satisfy also the following stronger form of the
condition (\ref{hyperb1}):
\begin{equation}\label{hyperb2}
\rho((1-t)x\oplus ty,(1-t)w\oplus tz)\le (1-t)\rho(x,w)+t\rho(y,z).
\end{equation}

(To get \eqref{hyperb2} from \eqref{hyperb1} we observe that, if
for some value of $t$ we have the inequalities $\rho((1-t)x\oplus
ty,(1-t)x\oplus tz)\le t\rho(y,z)$ and $\rho((1-t)x\oplus
tz,(1-t)w\oplus tz)\le (1-t)\rho(x,w)$, then, by the triangle
inequality, we have \eqref{hyperb2} for that value of $t$. Using
this observation repeatedly we prove the inequalities from this
paragraph for $t$ of the form $\frac{k}{2^n}$ $(k\in\mathbb{N},
1\le k\le 2^n)$. Then we use continuity.)
\medskip

A subset $C\s \X$ is called {\it convex} if $x,y\in C$ implies
$[x,y]\s C$. Sometimes we say {\it $\rho$-convex} instead of
convex, to avoid confusion with other natural notions of convexity
for the same set. We use the notation $E_{a,r}$ for $\{x\in
\X:~\rho(a,x)\le r\}$ and call such sets {\it closed balls}. The
condition \eqref{hyperb2} implies that in a hyperbolic space all
closed balls are convex.

\section{Normal structure}

Let $M$ be a subset in a metric space $(\X,\rho)$. The {\it diameter} of $M$ is defined by
\begin{equation}\label{diam} \diam M = \sup\{\rho(x,y): x,y\in M\}. \end{equation} A point $a\in M$ is called
{\it diametral} if
$$\sup\{\rho(a,x):x\in M\} = \diam M.$$

A hyperbolic space $\X$ is said to have {\it normal structure} if
every convex bounded subset of $\X$ with more than one element has
a non-diametral point.
\medskip

This notion goes back to Brodskii and Milman \cite{BM48} who
proved that uniformly convex Banach spaces (which are hyperbolic
spaces) have normal structure. Takahashi \cite{Tak} introduced and
studied normal structure in somewhat more general context. See
\cite[Chapter 3]{BL00} for a nice account on those aspects of
fixed point theory which are related to the geometry of Banach
spaces.

\begin{lemma}\label{perif}
Let $M$ be a separable bounded convex subset of a hyperbolic space $\X$ and $\alpha$ be the diameter of $M$. If
all points of $M$ are diametral then $M$ contains a sequence $\{a_n\}$ with the property: $\lim_{n\to \infty}
\rho(a_n,x) = \alpha$ for each $x\in M$.
\end{lemma}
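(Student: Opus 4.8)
The plan is to build the sequence $\{a_n\}$ greedily, exploiting at each stage the fact that every point of $M$ is diametral together with the convexity inequality \eqref{hyperb2}. Start with an arbitrary $a_1\in M$. Since $M$ is separable, fix a countable dense subset $\{x_1,x_2,\dots\}$ of $M$. The idea is to choose $a_n$ so that it is simultaneously almost-diametral with respect to $x_1,\dots,x_{n-1}$ and with respect to the previously constructed $a_1,\dots,a_{n-1}$; then the midpoint-convexity of the metric will force the distances $\rho(a_n,x)$ to be close to $\alpha$ for all $x\in M$ once $n$ is large.

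First I would record the basic mechanism. Suppose $b\in M$ satisfies $\rho(b,y)\ge\alpha-\eps$ and $\rho(b,z)\ge\alpha-\eps$ for two points $y,z\in M$, and let $m=\frac12 y\oplus\frac12 z\in M$ (this lies in $M$ because $M$ is convex). Applying \eqref{hyperb2} with $t=\tfrac12$, $x=w=b$ gives $\rho\big(b,\tfrac12 y\oplus\tfrac12 z\big)\le\tfrac12\rho(b,y)+\tfrac12\rho(b,z)$, which is the wrong direction, so instead I use it the other way: since $m$ is diametral, $\sup_{u\in M}\rho(m,u)=\alpha$, and in particular there is a point far from $m$; more usefully, apply \eqref{hyperb2} with the roles reversed to compare $\rho(y,z)$ with $\rho(b,y),\rho(b,z)$ via the triangle inequality and the fact that $b$ is nearly extremal. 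The cleaner route: if $\rho(a_i,a_j)\ge\alpha-\eps$ for all $i,j\le n$, then for $m_{ij}=\tfrac12 a_i\oplus\tfrac12 a_j$ and any $x\in M$, \eqref{hyperb1} (centered appropriately) gives $\rho(m_{ij},x)\ge$ something forcing two of the $\rho(a_i,x)$ to be large; iterating, one shows that for a suitable choice all but finitely many $a_n$ have $\rho(a_n,x)\ge\alpha-\eps$. So at step $n$ I would choose $a_n\in M$ with $\rho(a_n,a_i)\ge\alpha-\tfrac1n$ for all $i<n$ (possible since $a_i$ is diametral, after a small perturbation to get it realized or nearly realized) and simultaneously $\rho(a_n,x_j)\ge\alpha-\tfrac1n$ for $j<n$ — this requires that the set of points $\eps$-far from a given finite set is nonempty, which again follows from each $x_j$ being diametral combined with the convexity inequality applied to midpoints.

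The remaining step is to pass from "$\rho(a_n,x_j)\to\alpha$ for each fixed $j$" and "$\rho(a_n,a_i)\to\alpha$" to "$\rho(a_n,x)\to\alpha$ for every $x\in M$", using density. Here, given $x\in M$ and $\eps>0$, pick $x_j$ with $\rho(x,x_j)<\eps$; then $\rho(a_n,x)\ge\rho(a_n,x_j)-\eps\ge\alpha-\tfrac1n-\eps$ for $n>j$, and of course $\rho(a_n,x)\le\alpha$ since $\diam M=\alpha$. Letting $n\to\infty$ and then $\eps\to0$ gives $\lim_n\rho(a_n,x)=\alpha$. The main obstacle is the construction in the previous paragraph: ensuring at each finite stage that a point of $M$ exists which is $\eps$-far from all of the finitely many previously selected points at once. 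This is exactly where the diametrality of every point and the convexity inequality \eqref{hyperb2} (or \eqref{hyperb1}) must be combined — diametrality of a single point only guarantees one faraway point, so one argues that the midpoint of two "bad" configurations is itself diametral and pushes the obstruction outward, a finite induction that I expect to be the technical heart of the proof.
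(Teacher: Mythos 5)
Your overall strategy (fix a dense sequence, find $a_n$ simultaneously almost-diametral with respect to the first $n$ dense points, then finish by a density/triangle-inequality argument) is the same as the paper's, and your closing density step is correct. But the technical heart — producing, for each $n$, a single point of $M$ that is within $o(1)$ of $\alpha$ from \emph{all} of $c_1,\dots,c_n$ at once — is exactly the part you leave as "a finite induction that I expect to be the technical heart of the proof," and your sketch of it is garbled. In particular, you dismiss the inequality $\rho\bigl(b,\tfrac12 y\oplus\tfrac12 z\bigr)\le\tfrac12\rho(b,y)+\tfrac12\rho(b,z)$ as "the wrong direction," when it is precisely the inequality you need: if $m=\tfrac12 y\oplus\tfrac12 z$ lies in $M$ and is therefore diametral, there is $b$ with $\rho(b,m)\ge\alpha-\eps$, and then $\alpha-\eps\le\tfrac12\rho(b,y)+\tfrac12\rho(b,z)$ combined with $\rho(b,y),\rho(b,z)\le\alpha$ forces both distances to be at least $\alpha-2\eps$. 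Your subsequent attempt to "use it the other way" and to extract a lower bound on $\rho(m_{ij},x)$ from \eqref{hyperb1} reverses the roles: \eqref{hyperb1} only gives upper bounds on distances from midpoints; the lower bound comes from diametrality of the midpoint, not from the convexity inequality.

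The paper closes this gap with a clean device you should note: define barycenters recursively by $b_1=c_1$, $b_{n+1}=\frac{n}{n+1}b_n\oplus\frac{1}{n+1}c_{n+1}$, and prove by induction from \eqref{hyperb2} that $\rho(x,b_n)\le\frac1n\sum_{k=1}^n\rho(x,c_k)$ for every $x$. Since $b_n\in M$ is diametral, pick $a_n$ with $\rho(a_n,b_n)\ge(1-\tfrac1{n^2})\alpha$; then $\frac1n\sum_{k=1}^n\rho(a_n,c_k)\ge(1-\tfrac1{n^2})\alpha$, and if some $\rho(a_n,c_j)<(1-\tfrac1n)\alpha$ the average would drop below $(1-\tfrac1{n^2})\alpha$, a contradiction. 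This handles all $n$ points in one stroke, replacing your iterated-midpoint induction (which as written would only work comfortably for $2^k$ points) with an explicit weighted convex combination. Also, your extra requirement that $a_n$ be far from the previously chosen $a_i$ is unnecessary; being far from the dense points $c_1,\dots,c_n$ suffices.
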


\begin{proof}
Let $\{c_n\}$ be a dense sequence in $M$. We define a sequence
$\{b_n\}$ of ``centers of mass'' by the following rule: $b_1 =
c_1$, $b_{n+1} = \frac{n}{n+1}b_n\oplus \frac{1}{n+1}c_{n+1}$. By
convexity of $\rho$ we have
\begin{equation}\label{mean}
\rho(x,b_n) \le \frac{1}{n}\sum_{k=1}^n \rho(x,c_k)
\end{equation}
for all $n\in \mathbb{N}$. Indeed for $n=1$ this is obvious. If it
is true for some $n$, then $\rho(x,b_{n+1})\le
\frac{1}{n+1}\rho(x,c_{n+1}) + \frac{n}{n+1}\rho(x,b_n) \le
\frac{1}{n+1}\rho(x,c_{n+1}) +
\frac{n}{n+1}\frac{1}{n}\sum_{k=1}^n \rho(x,c_k) =
\frac{1}{n+1}\sum_{k=1}^{n+1} \rho(x,c_k)$.

By convexity of $M$ we have $b_n\in M$ for each $n\in\mathbb{N}$.
Our assumption implies that $b_n$ is diametral, hence there is a
point $a_n\in M$ with $\rho(b_n,a_n) \ge (1-
\frac{1}{n^2})\alpha$. It follows that $(1- \frac{1}{n^2})\alpha
\le \frac{1}{n}\sum_{k=1}^n \rho(a_n,c_k)$. If $\rho(a_n,c_j) <
(1-\frac{1}{n})\alpha$, for some $j\le n$, then
$\frac{1}{n}\sum_{k=1}^n \rho(a_n,c_k)<
\frac1n(1-\frac{1}{n})\alpha +\frac{n-1}n\alpha=(1-
\frac{1}{n^2})\alpha$, a contradiction. Hence $\rho(a_n,c_j) \ge
(1-\frac{1}{n})\alpha$ for $j\le n$. This shows that
$\lim_{n\to\infty}\rho(a_n,c_j)=\alpha$ for each fixed $j$. Since
the sequence $\{c_j\}$ is dense in $M$, the lemma is proved.
\end{proof}

\section{The invariant distance in the operator
ball}\label{S:distance}

Recall that $K,H$ denote Hilbert spaces and $\B$ is the open unit ball of $L(K,H)$. For $A,X\in \B$ set
\begin{equation}\label{mobius}
M_A(X) = (1-AA^*)^{-1/2}(A+X)(1+A^*X)^{-1}(1-A^*A)^{1/2}.
\end{equation}
Clearly all $M_A$ are holomorphic on $\B$. They are called {\it
the M\"obius transformations}.   It can be proved  that $M_A^{-1}
= M_{-A}$ (see \cite{harris}, Theorem 2). Hence each M\"obius
transformation is a biholomorphic automorphism of $\B$. Since
$M_A(0) = A$  the group of all biholomorphic automorphisms is
transitive on $\B$.
\medskip

We set
\begin{equation}\label{dist0}
\rho(A,B) = \tanh^{-1}( ||M_{-A}(B)||).
\end{equation}
It is easy to see   that $\rho$ coincides with the Carath\'eodory
distance $c_{\B}$ in $\B$. Indeed, by \cite[Theorem
4.1.8]{Vesent}, $c_{\B}(0,B) = \tanh^{-1}(\|B\|)$ (this holds for
the unit ball of every Banach space). Since $c_{\B}$ is invariant
and $M_A$ sends $A$ to $0$ we get:
\begin{equation}\label{dist}
c_{\B}(A,B) = \tanh^{-1} ||M_{-A}(B)|| = \rho(A,B).
\end{equation}
 Hence $\rho$ is invariant with respect to
biholomorphic automorphisms. I.~Shafrir \cite{Sha} proved that the
space $(\B,\rho)$ is hyperbolic. We present a proof of this result
in the appendix.
\medskip

A set in $\B$ is called {\it bounded} if it is contained in some $\rho$-ball, or equivalently in a multiple
$r\B$ of the operator ball with $r<1$. So a set is bounded if and only if it is separated from the boundary of
$\B$ in the sense of Section \ref{S:Introduction}.

The following lemma is a special case of a more general result
proved in \cite[Theorem IV.2.2]{Vesent}.
\begin{lemma}\label{twomet}
On any bounded set the hyperbolic metrics is equivalent to the operator norm.
\end{lemma}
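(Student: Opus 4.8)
The plan is to show the two metrics $\rho$ (the hyperbolic metric) and $\|\cdot - \cdot\|$ (the operator norm) induce the same topology on any bounded set $M \subseteq r\B$, $r < 1$, and in fact are uniformly equivalent there. Since the statement is quoted as a special case of \cite[Theorem IV.2.2]{Vesent}, one could simply cite it; but a self-contained argument runs as follows. Both metrics are invariant under the Möbius transformations $M_A$ (the norm because $M_A$ is a biholomorphic automorphism and, on bounded sets, biholomorphic maps of a bounded domain are bi-Lipschitz for the norm by Cauchy estimates; $\rho$ by \eqref{dist}), so after composing with a suitable $M_{-A}$ it suffices to compare the two metrics near the origin, i.e. to estimate $\rho(0, X) = \tanh^{-1}(\|X\|)$ against $\|X\|$ for $X$ in a norm-ball of radius $r < 1$. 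Here $\tanh^{-1}$ is a smooth increasing bijection of $[0, r]$ onto $[0, \tanh^{-1} r]$ with $\tanh^{-1}(0) = 0$ and derivative bounded between $1$ and $(1 - r^2)^{-1}$, so $\|X\| \le \rho(0,X) \le (1-r^2)^{-1}\|X\|$ on that ball. This gives the equivalence at one point, with constants depending only on $r$.

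To upgrade the one-point estimate to a uniform estimate on all of $M$, the key observation is that for $A, B \in r\B$ the quantity $\|M_{-A}(B)\|$ is itself controlled, above and below, by $\|A - B\|$ with constants depending only on $r$. The upper bound $\|M_{-A}(B)\| \le c(r)\|A-B\|$ follows by writing $M_{-A}(B) = M_{-A}(B) - M_{-A}(A)$ and applying the Cauchy-estimate Lipschitz bound for the holomorphic map $M_{-A}$ on $r\B$ (its image lies in $\B$, and the Lipschitz constant on $r\B$ of any holomorphic map into $\B$ is at most $(1-r)^{-1}$ or a similar explicit function of $r$, uniformly in $A$). The lower bound comes from applying the same reasoning to the inverse map $M_A$: since $A = M_A(M_{-A}(A))$ and $B = M_A(M_{-A}(B))$, we get $\|A - B\| \le c(r)\|M_{-A}(B)\|$, provided $M_{-A}(B)$ stays in a fixed $r'\B$ with $r' < 1$ — which it does, because $A, B \in r\B$ forces a uniform bound $\|M_{-A}(B)\| \le r'(r) < 1$. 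Combining with the elementary estimate $\|X\| \le \tanh^{-1}\|X\| \le (1 - r'^2)^{-1}\|X\|$ for $\|X\| \le r'$ then yields constants $0 < k_1(r) \le k_2(r) < \infty$ with $k_1(r)\|A-B\| \le \rho(A,B) \le k_2(r)\|A-B\|$ for all $A, B \in M$.

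The main obstacle is the uniformity in $A$ of the bi-Lipschitz constants for $M_{\pm A}$ on bounded subsets of $\B$. This is where the finite-dimensionality of $K$ is \emph{not} needed — the Möbius formula \eqref{mobius} is explicit and the factors $(1 - AA^*)^{\pm 1/2}$, $(1 - A^*A)^{\pm 1/2}$, $(1 + A^*X)^{-1}$ all have norms bounded by explicit functions of $r$ when $A, X \in r\B$, so one can in fact extract the constants by differentiating \eqref{mobius} directly, or by the Cauchy-integral representation of the derivative of a bounded holomorphic map on the ball. I would favor the Cauchy-estimate route since it avoids grinding through the product rule for the three operator-valued factors; the clean statement is that any holomorphic $f \colon \B \to \B$ satisfies $\|f(A) - f(B)\| \le \frac{2r}{(1-r)^2}\,\|A - B\|$ (or a similar explicit bound) for $A, B \in r\B$, applied to $f = M_{-A_0}$ and to $f = M_{A_0}$ restricted to the appropriate sub-ball. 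Everything else is the elementary calculus of $\tanh^{-1}$.
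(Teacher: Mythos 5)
Your argument is correct, and it is worth noting that the paper does not actually prove this lemma at all: it is dispatched by citing a more general result, \cite[Theorem IV.2.2]{Vesent}, so you are supplying a self-contained proof where the authors supply only a reference. Your route is sound: the identity $\rho(A,B)=\tanh^{-1}\|M_{-A}(B)\|$ together with the elementary bounds $t\le\tanh^{-1}t\le(1-r'^2)^{-1}t$ on $[0,r']$ reduces everything to showing that $\|M_{-A}(B)\|$ and $\|A-B\|$ are comparable uniformly for $A,B\in r\B$, and both directions follow from a Lipschitz estimate for $M_{\pm A}$ that is uniform over $\|A\|\le r$; your observation that $\|M_{-A}(B)\|\le\tanh(2\tanh^{-1}r)<1$ correctly supplies the domain restriction needed for the reverse direction. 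For the uniform Lipschitz constant you do not even need Cauchy estimates: the paper itself, in the proof of (iv)$\Rightarrow$(iii) of Theorem \ref{main}, computes the explicit bound $\|M_A(X)-M_A(Y)\|\le 3D^{5/2}\|X-Y\|$ with $D=(1-\|A\|)^{-1}$, valid for all $X,Y\in\B$, which is exactly what your argument requires (and confirms your remark that finite-dimensionality of $K$ plays no role here). Two small cautions: the operator norm is not \emph{invariant} under $M_A$, only quasi-invariant with constants depending on $\|A\|$, so the opening sentence of your first paragraph should be softened to match the parenthetical that follows it; and the one-point reduction sketched there does not by itself give equivalence of the metrics at pairs of points --- but your second paragraph bypasses it and carries out the two-point comparison correctly, so nothing is lost.
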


\section{\WOT -topology}

As before, let $\B$ be the unit ball of the space of operators
from $K$ to $H$. We suppose that $K$ is finite-dimensional, $\dim
K = n$, and that $H$ is separable. We consider biholomorphic maps
on $\B$. By \WOT~ we denote the weak operator topology (see
\cite[p.~476]{DS58}). Because of the separability, the restriction
of this topology to $\B$ is metrizable, so in our arguments we may
consider only sequences, not nets.
\begin{lemma}\label{WOT}
If $K$ is finite-dimensional and $H$ is separable then all
biholomorphic maps of $\B$ are $\WOT $-con\-ti\-nuous.
\end{lemma}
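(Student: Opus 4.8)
The plan is to reduce the statement to a concrete fact about the M\"obius transformations $M_A$ and the structure of biholomorphic automorphisms of the operator ball. First I would recall the standard description of the automorphism group of $\B$: every biholomorphic automorphism $g$ of $\B$ has the form $g = U \circ M_A$ for some $A \in \B$ and some linear isometry $U$ of $L(K,H)$ that preserves $\B$ (this follows from the fact, noted in the excerpt, that the M\"obius group is transitive, combined with the classical Cartan-type linearity theorem for automorphisms fixing $0$; the linear automorphisms of the operator ball are of the form $X \mapsto VXW$ for unitaries $V$ on $H$ and $W$ on $K$, though for the proof one only needs that they are norm-isometries and weakly continuous). So it suffices to prove separately that (a) each linear automorphism $X \mapsto VXW$ is \WOT-continuous on $\B$, and (b) each M\"obius transformation $M_A$ is \WOT-continuous on $\B$.

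Part (a) is immediate: if $X_k \to X$ weakly and $V,W$ are bounded, then $VX_kW \to VXW$ weakly, since $\langle VX_kW\xi,\zeta\rangle = \langle X_k(W\xi), V^*\zeta\rangle \to \langle X(W\xi), V^*\zeta\rangle$ for all $\xi \in K$, $\zeta \in H$. The real content is part (b). Here the key point is that $K$ is \emph{finite-dimensional}. Writing $M_A(X) = (1-AA^*)^{-1/2}(A+X)(1+A^*X)^{-1}(1-A^*A)^{1/2}$, the outer factors $(1-AA^*)^{-1/2}$ and $(1-A^*A)^{1/2}$ are fixed bounded operators, so by the argument of part (a) they cause no trouble. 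The additive term $A+X$ is trivially weakly continuous in $X$. So everything comes down to showing that $X \mapsto (1+A^*X)^{-1}$ is \WOT-continuous on $\B$. Now $A^*X \in L(K,K)$ acts on the finite-dimensional space $K$, so $1 + A^*X$ is an $n\times n$ matrix (in a fixed basis of $K$) whose entries are $\langle (1+A^*X)e_i, e_j\rangle = \delta_{ij} + \langle Xe_i, Ae_j\rangle$; if $X_k \to X$ weakly these entries converge. Since $\|A\| < 1$ and $\|X\| \le 1$ (indeed $<1$ on $\B$, but $\le 1$ suffices by a limiting remark, or one restricts to $\B$ directly), $1 + A^*X$ is invertible with $\|(1+A^*X)^{-1}\| \le (1-\|A\|)^{-1}$ uniformly, so matrix inversion — a rational, hence continuous, map on the set of $n\times n$ matrices with uniformly bounded inverse — gives that the entries of $(1+A^*X_k)^{-1}$ converge to those of $(1+A^*X)^{-1}$. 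Because $K$ is finite-dimensional, entrywise convergence of these matrices is convergence in norm, so $(1+A^*X_k)^{-1} \to (1+A^*X)^{-1}$ in operator norm, a fortiori weakly.

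Finally I would assemble the pieces: $M_A(X_k) = C_1 (A+X_k) (1+A^*X_k)^{-1} C_2$ where $C_1, C_2$ are fixed bounded operators; $(A+X_k) \to (A+X)$ weakly and $(1+A^*X_k)^{-1} \to (1+A^*X)^{-1}$ in norm, and a product of a weakly convergent bounded sequence with a norm-convergent bounded sequence converges weakly; then left/right multiplication by the bounded operators $C_1, C_2$ preserves weak convergence. Hence $M_A(X_k) \to M_A(X)$ weakly, and combined with (a) every $g = U\circ M_A$ is \WOT-continuous on $\B$.

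The main obstacle is the justification that $X \mapsto (1+A^*X)^{-1}$ is \WOT-continuous, and this is exactly where finite-dimensionality of $K$ is used in an essential way: for infinite-dimensional $K$, inversion is not weakly continuous, and the argument genuinely breaks down. A secondary point one must be careful about is having a clean reference or short argument for the structure theorem $g = U \circ M_A$ with $U$ linear; if one prefers to avoid invoking the Cartan linearity theorem, one can instead argue directly that $g \circ M_{-g^{-1}(0)}$ fixes $0$ and is therefore linear by Schwarz-lemma considerations on the operator ball (as in \cite{Vesent} or \cite{harris}), but either way the weak continuity of the \emph{linear} part is the easy half of the proof.
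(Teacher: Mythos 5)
Your proof is correct and follows essentially the same route as the paper: decompose $g$ via the Cartan/Harris theorem into a M\"obius transformation composed with a linear isometry, and reduce the M\"obius case to the observation that $X\mapsto (1+A^*X)^{-1}$ maps \WOT-convergence into \emph{norm} convergence because $K$ is finite-dimensional, after which the product $(A+X)(1+A^*X)^{-1}$ converges weakly as a product of a weakly convergent bounded factor with a norm-convergent one. The only cosmetic difference is in the linear part: you invoke the explicit form $X\mapsto VXW$ of the isometries, whereas the paper avoids that classification by noting that for finite-dimensional $K$ the \WOT\ on $L(K,H)$ coincides with the weak Banach-space topology, so \emph{every} bounded linear map is automatically \WOT-continuous.
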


\begin{proof} Let us firstly show that all M\"obius transforms $M_B$ are
$\WOT $-continuous (this was noticed and used already in the paper
of Krein \cite{Krein}). Indeed let $B\in \B$ be fixed, then the
map $\varphi: X\mapsto 1+B^*X$ from $(\B,\WOT )$ to
$(L(K,K),\WOT)$. Moreover, since $K$ is finite-dimensional,
$\varphi$ remains continuous if instead of \WOT~ we endow $L(K,K)$
with its norm topology. The map $T\to T^{-1}$ is norm continuous
on the group of invertible operators on $K$. Hence the map $\psi:
X\mapsto (1+B^*X)^{-1}$ is continuous from $(\B,\WOT)$ to $L(K,K)$
with its norm topology.

It follows that the map $\omega: X\to (X+B)(1+B^*X)^{-1}$ is
continuous from $(\B, \WOT)$ to $(\B, \WOT)$. Indeed, if $X_n\to
X$, then $\omega(X_n)-\omega(X) = (X_n +B)(\psi(X_n)-\psi(X)) +
(X_n - X)\psi(X)$, where $\psi$ was defined above. The first
summand tends to zero in norm while the second one tends to zero
in $\WOT$.
\medskip

By a result of Harris \cite{Har}, if a biholomorphic map of $\B$
preserves the point $0$, then it coincides with the restriction to
$\B$ of an isometric linear map $h: L(K,H)\to L(K,H)$. Since $K$
is finite-dimensional, the \WOT -topology on $L(K,H)$ coincides
with the weak topology (indeed $L(K,H)$ is linearly homeomorphic
to the direct sum of $n$ copies of $H$); since any bounded linear
map is weakly continuous, $h$ is \WOT-continuous. On the other
hand, if $\varphi$ is a biholomorphic map of $\B$ and $A =
\varphi(0)$ then $\psi = M_{-A}\circ \varphi$ is a biholomorphic
map preserving $0$. Hence $\psi$ is an isometric linear map and
$\varphi = M_{-A}^{-1}\circ \psi = M_A\circ \psi$ is a composition
of two \WOT-continuous maps. Thus $\varphi$ is \WOT-continuous.
\end{proof}

\begin{corollary}\label{loccompact}
If $\dim K < \infty$ and $H$ is separable, then each ball
$E_{A,r}$ is \WOT-compact.
\end{corollary}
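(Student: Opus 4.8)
The plan is to reduce to the centered ball $E_{0,r}$ by means of a M\"obius transformation, using Lemma~\ref{WOT}, and then to recognize $E_{0,r}$ as a norm-closed ball in $L(K,H)$, whose \WOT-compactness follows from $\dim K<\infty$.

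First I would note that a $\rho$-ball is a bounded set in the sense of Section~\ref{S:distance}, hence separated from the boundary: there is $t<1$ with $E_{A,r}\subseteq\{X\in L(K,H):\|X\|\le t\}\subseteq\B$, so in particular every M\"obius transformation is defined and, by Lemma~\ref{WOT}, \WOT-continuous on $E_{A,r}$. Next, handle the case $A=0$. Since $M_0$ is the identity, \eqref{dist0} gives $E_{0,r}=\{X\in\B:\|X\|\le\tanh r\}$, the closed operator-norm ball of radius $\tanh r<1$. As $\dim K=n<\infty$, fixing an orthonormal basis $e_1,\dots,e_n$ of $K$ the map $X\mapsto(Xe_1,\dots,Xe_n)$ is a linear homeomorphism of $(L(K,H),\WOT)$ onto $H^{n}$ equipped with the product of the weak topologies, and it carries $E_{0,r}$ into the product of $n$ copies of the ball $\{h\in H:\|h\|\le\tanh r\}$, which is weakly compact because $H$ is reflexive; by Tychonoff that product is \WOT-compact. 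Since $E_{0,r}$ is norm-closed and convex in $L(K,H)$, it is weakly closed, i.e. \WOT-closed, and a \WOT-closed subset of a \WOT-compact set is \WOT-compact.

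For general $A$, the definition \eqref{dist0} together with the bijectivity of $M_{-A}$ on $\B$ gives $M_{-A}(E_{A,r})=\{Y\in\B:\tanh^{-1}\|Y\|\le r\}=E_{0,r}$, hence $E_{A,r}=M_A(E_{0,r})$. Since $E_{0,r}\subseteq\B$ and $M_A$ is \WOT-continuous on $\B$ by Lemma~\ref{WOT}, $E_{A,r}$ is the continuous image of the \WOT-compact set $E_{0,r}$, so it is \WOT-compact. The one point needing genuine care is the \WOT-compactness of $E_{0,r}$, where finite-dimensionality of $K$ is used essentially so that the relevant product is finite and Banach--Alaoglu/Tychonoff apply directly; the rest is bookkeeping built on the already established \WOT-continuity of M\"obius transformations.
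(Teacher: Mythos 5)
Your proof is correct and follows essentially the same route as the paper: reduce to $E_{0,r}$ via a \WOT-continuous M\"obius transformation (Lemma~\ref{WOT}) and then observe that $E_{0,r}$ is a closed operator-norm ball, hence \WOT-compact. The only cosmetic difference is that you verify the compactness of $E_{0,r}$ by hand via the identification of $L(K,H)$ with $H^{n}$ and Tychonoff, where the paper simply cites the Banach--Alaoglu theorem.
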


\begin{proof}
Since there is a M\"obius transform that maps $E_{A,r}$ onto
$E_{0,r}$, and since all M\"obius transforms are \WOT-continuous,
it suffices to consider the case $A = 0$. But $E_{0,r}$ is a usual
closed operator ball; its \WOT-compactness follows from the
Banach-Alaoglu theorem.
\end{proof}

\section{Restricted normal structure of $\B$}

The purpose of this section is to show that in the case when $\dim
K <\infty$ and $H$ is separable, the (open) operator ball $\B$
with the metric \eqref{dist0} has a restricted form of normal
structure in the sense that \WOT-compact $\rho$-convex subsets in
it have non-diametral points. As we already mentioned $\B$ with
the metric \eqref{dist0} is a hyperbolic space (see Section
\ref{S:appendix}). Our assumptions on $K$ and $H$ imply that $\B$
is separable in the norm-topology and hence, by Lemma
\ref{twomet}, with respect to $\rho$.

\begin{theorem}\label{T:non-diam} Let $K$ be finite dimensional and $H$ be separable. Let $M$ be a weakly
compact, $\rho$-convex subset of $\B$ endowed with its hyperbolic
metric. If $M$ is not a singleton, then $M$ contains a
non-diametral point.
\end{theorem}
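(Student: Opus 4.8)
We argue by contradiction: suppose that every point of $M$ is diametral, and derive a contradiction from Lemma~\ref{perif} together with a weak–compactness (Opial–type) argument. First I would reduce to the case where $M$ is $\rho$-bounded, i.e.\ separated from the boundary of $\B$; this is the situation relevant for the applications and is exactly what is needed to invoke Lemma~\ref{perif}. So assume $M\subseteq r\B$ with $r<1$, put $\alpha=\diam M$, and note $\alpha\in(0,\infty)$ since $M$ is not a singleton. Because $\B$ is norm–separable and, by Lemma~\ref{twomet}, $\rho$ is norm–equivalent on bounded sets, $M$ is $\rho$-separable, so Lemma~\ref{perif} applies and yields a sequence $\{a_n\}\subseteq M$ with $\rho(a_n,x)\to\alpha$ for every $x\in M$. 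Using weak compactness of $M$ (cf.\ Corollary~\ref{loccompact}) and the metrizability of $\WOT$ on $\B$, pass to a subsequence so that $a_n\to a$ in $\WOT$ with $a\in M$, and fix $y\in M$ with $y\neq a$ (possible as $M$ is not a singleton).

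The heart of the matter is an \emph{Opial–type property} of $(\B,\rho)$ that holds when $\dim K<\infty$: if $\{z_n\}$ lies in a $\rho$-bounded subset of $\B$ and $z_n\to z$ in $\WOT$, then $\limsup_n\rho(z_n,z)<\limsup_n\rho(z_n,w)$ for every $w\neq z$. To prove this I would use the M\"obius map $M_{-z}$, which by Lemma~\ref{WOT} is a $\WOT$-continuous $\rho$-isometry sending $z$ to $0$; replacing $z_n$ by $M_{-z}(z_n)$ and $w$ by $M_{-z}(w)$ reduces matters to $z=0$, where the assertion becomes $\limsup\|z_n\|<\limsup\|M_{-z_n}(w)\|$ for $w\neq 0$. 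Expanding $M_{-z_n}(w)$ from \eqref{mobius} and using the intertwining identity $(1-z_nz_n^*)^{-1/2}z_n=z_n(1-z_n^*z_n)^{-1/2}$, one exploits that $\dim K<\infty$: the operators $z_n^*w\in L(K,K)$ and the vectors $z_n^*\eta$ $(\eta\in H)$ live in finite–dimensional spaces, so $z_n\to0$ in $\WOT$ forces them to $0$ \emph{in norm}, while $z_nz_n^*\to0$ in $\WOT$. Passing to a subsequence along which $z_n^*z_n\to T$ in $L(K,K)$ (possible by finite–dimensionality; $\|T\|=\limsup\|z_n\|^2<1$), this gives $M_{-z_n}(w)=w(1-z_n^*z_n)^{1/2}-z_n+o(1)$, in the sense that the first term $\WOT$-converges to $w(1-T)^{1/2}$, the middle term $z_n\to0$ in $\WOT$, and the remainder goes to $0$ in norm. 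Since $z_n$ escapes in a $\WOT$-null direction whereas $w(1-T)^{1/2}$ does not, one concludes $\limsup\|M_{-z_n}(w)\|^2\ge\|w(1-T)^{1/2}\|^2+\limsup\|z_n\|^2>\limsup\|z_n\|^2$, using $w\neq0$ and $\|T\|<1$; this is the Opial inequality. For $\dim K=1$ this is a standard Hilbert–space computation; carrying it out for general $\dim K>1$ — in particular controlling the cross terms with the operator norm rather than the Hilbert–Schmidt norm — is the step I expect to be the main technical obstacle.

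Granting the Opial property, the contradiction is immediate: apply it to the subsequence $\{a_n\}$, which $\WOT$-converges to $a$, with $w=y\neq a$, to get $\limsup_n\rho(a_n,a)<\limsup_n\rho(a_n,y)$. But $a,y\in M$, so by Lemma~\ref{perif} both $\rho(a_n,a)\to\alpha$ and $\rho(a_n,y)\to\alpha$, whence $\alpha<\alpha$ — absurd. Hence $M$ must contain a non-diametral point.

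Finally, a word on where the hypotheses are used. Separability of $H$ only serves to make $\WOT$ metrizable on $\B$, so that sequences suffice; weak compactness of $M$ provides the $\WOT$-limit $a\in M$; the assumption $\dim K<\infty$ is essential for the Opial property through the rigidity ``weak convergence in $K$ is norm convergence''; and $\rho$-convexity of $M$, together with hyperbolicity of $(\B,\rho)$, is precisely what makes Lemma~\ref{perif} applicable, yielding the very strong fact that the asymptotic–radius function $x\mapsto\limsup_n\rho(a_n,x)$ is constantly $\alpha$ on $M$, in conflict with the uniqueness of the asymptotic center forced by Opial.
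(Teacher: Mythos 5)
Your overall architecture is the same as the paper's: assume all points diametral, invoke Lemma~\ref{perif}, extract a \WOT-convergent subsequence with limit in $M$, translate the limit to $0$ by a M\"obius map, and then try to produce a distance exceeding $\alpha$. The proof fails, however, at exactly the step you yourself flagged as the main technical obstacle: the Opial-type inequality is \emph{false} for the operator ball once $\dim K\ge 2$. Your computation correctly reduces matters to $M_{-z_n}(w)=w(1-z_n^*z_n)^{1/2}-z_n+o(1)$ with the cross terms $(1-z_n^*z_n)^{1/2}w^*z_n$ vanishing in norm, so that along the subsequence with $z_n^*z_n\to T$ one gets the exact value
\begin{equation*}
\limsup_n\|M_{-z_n}(w)\|^2=\bigl\|(1-T)^{1/2}w^*w(1-T)^{1/2}+T\bigr\|.
\end{equation*}
But for positive operators $S\ne 0$ and $T$ on $K$ one only has $\|S+T\|\ge\max(\|S\|,\|T\|)$, not $\|S\|+\|T\|$: the Pythagorean additivity you invoke is a Hilbert-norm identity, not an operator-norm one. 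Concretely, take $\dim K=2$, $w=\beta\, e_1\otimes f_1^*$ and $z_n=\beta\, e_n\otimes f_2^*$ with $(e_n)$ orthonormal in $H$ and $(f_1,f_2)$ orthonormal in $K$; then $z_n\to0$ in \WOT, $T=\beta^2 P_{f_2}$, $w(1-T)^{1/2}=w\ne0$, yet $\|M_{-z_n}(w)\|\to\beta=\limsup\|z_n\|$, so $\rho(z_n,w)$ and $\rho(z_n,0)$ have the same limit. Hence asymptotic centers are not unique and your final contradiction ($\alpha<\alpha$) does not materialize when the fixed comparison point $y\in M$ happens to ``live'' on a part of $K$ orthogonal to where $T$ attains its norm.

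The paper avoids this by not comparing $a_n$ with an arbitrary $y\in M$ but with another term $A_m$ of the same sequence: after passing to a subsequence one may assume $A_n^*A_n\to P$ in norm with $\|P\|=\beta^2$, so for $Q=A_m^*A_m$ close to $P$ the relevant operator is $(1-Q)^{1/2}P(1-Q)^{1/2}+Q\approx 2Q-Q^2$, whose norm is at least $2\beta^2-\beta^4>\beta^2$ precisely because the ``directions in $K$'' of $A_n$ and $A_m$ are aligned. That alignment is the missing ingredient in your argument; your proof would be repaired by choosing $w=a_m$ for suitable large $m$ rather than an arbitrary $y$, at which point it becomes the paper's proof.
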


\begin{proof} Let $\alpha=\diam M>0$. Assume the contrary, that is, all points in $M$ are diametral.
By Lemma \ref{perif}, there is a sequence $\{A_n\}$ in $M$ such that $\lim_{n\to \infty} \rho(A_n,X) = \alpha$
for each $X\in M$.

 Since $M$ is weakly compact, the sequence $\{A_n\}_{n=1}^\infty$ contains a weakly convergent subsequence. Let
$W$ be its limit, we have $W\in M$ (since $M$ is weakly compact).
\medskip

Throughout this proof we will not change our notation after
passing to a subsequence.
\medskip

Since $W\in M$ we get
\begin{equation}\label{E:limrho}\lim_{n\to\infty}\rho(W,A_n)=\alpha.
\end{equation}

We will get a contradiction by proving
\begin{equation}\label{E:gamma}\sup_{n,m}\rho(A_n,A_m)>\alpha.\end{equation}

We may assume without loss of generality that $W=0$ (we can consider a M\"obius transformation which maps $W$ to
$0$, it is a $\rho$-isometry and weak homeomorphism).
\medskip

Let $\beta=\tanh\alpha$. Then \eqref{E:limrho} leads to
$\lim_{n\to\infty}||A_n||=\beta$ and it suffices to show that
$$\sup_{n,m}||M_{A_m}(-A_n)||>\beta.$$

Since $K$ is finite dimensional and $A_n\in L(K,H)$, we can select
a strongly convergent subsequence in the sequence $\{A_n^*A_n\}$.
Assume that  $A_n^*A_n\to P$, where $P\in L(K,K)$. It is clear
that $P\ge 0$ and $\|P\| = \beta^2$.
\medskip

Choose $\varepsilon> 0$ and fix a number $m$ with $\|A_m^*A_m -
P\|<\varepsilon $. For brevity, denote $A_m^*A_m$ by $Q$. We prove
that  $\lim_{n\to \infty}\|M_{A_m}(-A_n)\|
> \beta$ if $\varepsilon>0$ is small enough.
By the definition,
\begin{equation}\label{mobius2}
M_{A_m}(-A_n) = (1-A_mA_m^*)^{-1/2}(A_m-A_n)(1-A_m^*A_n)^{-1}(1-A_m^*A_m)^{1/2}.
\end{equation}
Since $A_m^*$ is of finite rank $A_m^*A_n \to 0$ in the norm topology. Hence $\lim_{n\to
\infty}\|M_{A_m}(-A_n)\| = \lim_{n\to \infty}\|T_n\|$ where
$$T_n = (1-A_mA_m^*)^{-1/2}(A_m-A_n)(1-A_m^*A_m)^{1/2} = A_m - (1-A_mA_m^*)^{-1/2}A_n(1-A_m^*A_m)^{1/2}.$$

It follows from the identity
$$(1-t)^{-1/2} - 1 = \frac{t}{(1-t)(1+(1-t)^{-1/2})}$$
that the operator $(1-A_mA_m^*)^{-1/2}$ is a finite rank
perturbation of the identity operator. Since $A_n\to 0$ in \WOT ,
we obtain that $\|T_n-S_n\|\to 0$, where $S_n = A_m -
A_n(1-A_m^*A_m)^{1/2}$.

Denote  $A_n(1-A_m^*A_m)^{1/2}$ by $B_n$. Since $B_n\to 0$ in \WOT
, the sequence $$(A_m - B_n)^*(A_m-B_n) - A_m^*A_m - B_n^*B_n = -
A_m^*B_n - B_n^*A_m$$  tends to zero in norm topology.
Furthermore,
$$B_n^*B_n = (1-Q)^{1/2}A_n^*A_n(1-Q)^{1/2} $$ tends in norm topology to $(1-Q)^{1/2}P(1-Q)^{1/2}$. Therefore
$$(A_m -
B_n)^*(A_m-B_n) \to Q + (1-Q)^{1/2}P(1-Q)^{1/2}.$$ Since $\|P-Q\|
<\varepsilon $, we have that $$\|Q + (1-Q)^{1/2}P(1-Q)^{1/2} - (Q
+(1-Q)Q)\| <\varepsilon.$$ The inequalities  $$\beta^2 -
\varepsilon\le \|Q\|\le \beta^2$$ imply $$\|Q + (1-Q)Q\| \ge
2\beta^2 - \beta^4 - 2\varepsilon,$$ whence
$$\lim_{n\to\infty}||S_n^*S_n||=\lim_{n\to\infty} \|(A_m -
B_n)^*(A_m-B_n)\|\ge 2\beta^2 - \beta^4 - 3\varepsilon > \beta^2,$$ if  $\varepsilon$ is sufficiently small.
\end{proof}

\section{Fixed points}

The main purpose of this section is to establish the existence of
a common fixed point for an elliptic group $G$ of biholomorphic
transformations of the operator ball $\B$. As is shown in Section
\ref{S:appendix} a biholomorphic transformation of $\B$ is a
bijective isometric transformation of the metric space $(\B,\rho)$
which maps the set $\M$ onto itself (and hence segments onto
segments).

\begin{lemma}\label{L:elliptic} If $G$ is an elliptic group of biholomorphic transformations of $\B$, then there is
a non-empty \WOT-compact $\rho$-convex $G$-invariant subset of
$\B$.
\end{lemma}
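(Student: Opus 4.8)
The plan is to obtain the required set as an intersection of closed $\rho$-balls. Fix any point $X_0\in\B$ and let $GX_0$ be its orbit. Since $G$ is elliptic, $GX_0$ is separated from the boundary, hence bounded in the sense of Section \ref{S:distance}, so it is contained in some closed ball $E_{0,s}$; by Corollary \ref{loccompact} this ball is \WOT-compact. Let $\mathcal{B}$ be the family of all closed balls $E_{a,r}$ (with $a\in\B$, $r\ge 0$) that contain $GX_0$; it is non-empty because $E_{0,s}\in\mathcal{B}$. Put
$$M=\bigcap_{E\in\mathcal{B}}E.$$

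I would then check the four required properties of $M$. First, $M\neq\emptyset$ because $GX_0\s M$. Second, $M$ is $\rho$-convex: every closed ball is $\rho$-convex by the remark following \eqref{hyperb2}, and an arbitrary intersection of $\rho$-convex sets is $\rho$-convex. Third, $M$ is \WOT-compact: each $E\in\mathcal{B}$ is \WOT-compact by Corollary \ref{loccompact}, in particular \WOT-closed, so $M$ is \WOT-closed; and $M\s E_{0,s}$, which is \WOT-compact, so $M$ is \WOT-compact. Fourth, $M$ is $G$-invariant: as recalled at the beginning of this section, each $g\in G$ is a bijective isometry of $(\B,\rho)$, hence carries each closed ball onto a closed ball of the same radius, $g(E_{a,r})=E_{g(a),r}$; since $g(GX_0)=GX_0$, the condition $GX_0\s E$ is equivalent to $GX_0\s g(E)$, so $g$ permutes the family $\mathcal{B}$; therefore $g(M)=\bigcap_{E\in\mathcal{B}}g(E)=\bigcap_{E'\in\mathcal{B}}E'=M$.

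The argument is short because the building blocks are chosen to already be both \WOT-compact and $\rho$-convex. Taking instead the \WOT-closure of the $\rho$-convex hull of $GX_0$ would raise the genuine difficulty that the \WOT-closure of a $\rho$-convex set need not be $\rho$-convex, since segments are defined metrically rather than weakly; intersecting closed balls sidesteps this entirely. The only points that deserve a line of justification are that a surjective $\rho$-isometry sends a closed ball onto a closed ball of the same radius, and that a \WOT-closed subset of a \WOT-compact ball is \WOT-compact; neither is a real obstacle, so I do not expect any serious difficulty in this lemma.
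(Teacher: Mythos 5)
Your proposal is correct and follows exactly the paper's own argument: take the orbit of a point, which is bounded by ellipticity, and let $M$ be the intersection of all closed $\rho$-balls containing that orbit; non-emptiness, $\rho$-convexity, \WOT-compactness, and $G$-invariance follow just as you describe. The extra details you supply (that a surjective $\rho$-isometry maps $E_{a,r}$ onto $E_{g(a),r}$, and that $M$ is a \WOT-closed subset of a \WOT-compact ball) are exactly the points the paper leaves implicit.
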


\begin{proof} Let $A\in \B$ be such that the orbit $G(A): = \{g(A):g\in G\}$ is bounded.
Therefore $G(A)$ is contained in some closed ball $E_{a,r}$. Let
$M$ be the intersection of all closed balls containing $G(A)$. It
is clear that this intersection is non-empty (it contains $G(A)$),
\WOT-compact and $\rho$-convex (as an intersection of \WOT-compact
$\rho$-convex sets). It remains to check that it is $G$-invariant.
To see this it suffices to observe that each element $g\in G$ maps
the set of balls containing $G(A)$ bijectively onto itself.
\end{proof}

\begin{lemma}\label{single} Let $G$ be an elliptic group of biholomorphic transformations of
$\B$. Let $M$ be a minimal \WOT-compact $\rho$-convex
$G$-invariant subset in $(\B,\rho)$. Then $M$ is a singleton.
\end{lemma}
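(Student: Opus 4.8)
The plan is to run the classical Brodskii--Milman ``Chebyshev centre'' argument, using the restricted normal structure of $\B$ (Theorem~\ref{T:non-diam}) as the key input. Assuming $M$ is not a singleton, I would produce a strictly smaller nonempty WOT-compact $\rho$-convex $G$-invariant subset of $M$, contradicting minimality. At the outset I would reduce to the case that $M$ is $\rho$-bounded (which is the situation of interest: by Lemma~\ref{L:elliptic} the relevant minimal sets sit inside a closed ball $E_{a,r}$), so that $\alpha:=\diam M<\infty$; recall also that under our standing hypotheses $(\B,\rho)$ is a separable hyperbolic space.

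So suppose $\alpha>0$. For $a\in M$ set $r(a)=\sup_{x\in M}\rho(a,x)$ and let $r_0=\inf_{a\in M}r(a)$. I would then consider the ``Chebyshev centre''
\[
C=\{a\in M:\ r(a)\le r_0\}=M\cap\bigcap_{x\in M}E_{x,r_0}.
\]
Since $M$ is WOT-compact and $\rho$-convex, and each closed ball $E_{x,r_0}$ is WOT-compact (Corollary~\ref{loccompact}) and $\rho$-convex (closed balls in a hyperbolic space are convex), $C$ is WOT-compact and $\rho$-convex. For nonemptiness I would write $C=\bigcap_{n}\bigl(M\cap\bigcap_{x\in M}E_{x,\,r_0+1/n}\bigr)$: each factor is WOT-compact and nonempty (it contains a point $a$ with $r(a)<r_0+1/n$, which exists by definition of $r_0$), and the family is nested, so the finite intersection property gives $C\ne\emptyset$. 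Finally $C$ is $G$-invariant: every $g\in G$ is a $\rho$-isometry with $g(M)=M$, hence $r(g(a))=r(a)$, so $g(C)=C$.

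The remaining point, and the one where normal structure is genuinely needed, is that $C\subsetneq M$. By Theorem~\ref{T:non-diam}, $M$ has a non-diametral point $a_0$, i.e. $r(a_0)<\alpha$, so $r_0<\alpha$. Choosing $x,y\in M$ with $\rho(x,y)>r_0$ (possible since $\sup_{x,y\in M}\rho(x,y)=\alpha>r_0$) we get $r(x)\ge\rho(x,y)>r_0$, so $x\in M\setminus C$. Thus $C$ is a nonempty WOT-compact $\rho$-convex $G$-invariant proper subset of $M$, contradicting minimality; hence $\alpha=0$ and $M$ is a singleton.

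I expect no serious obstacle here: the only substantive ingredient is Theorem~\ref{T:non-diam} (guaranteeing $r_0<\alpha$), and everything else is a routine check given Corollary~\ref{loccompact}, the convexity of balls in hyperbolic spaces, and the isometric action of biholomorphic maps on $(\B,\rho)$. The one bookkeeping subtlety worth care is the reduction to $\rho$-bounded $M$, needed so that $r(a)<\infty$ and Theorem~\ref{T:non-diam} is applicable.
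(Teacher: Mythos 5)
Your proof is correct and takes essentially the same Brodskii--Milman route as the paper: both replace $M$ by an intersection of closed balls of radius strictly less than $\diam M$ centred at the points of $M$, with Theorem~\ref{T:non-diam} supplying such a radius and minimality giving the contradiction. The only (cosmetic) difference is that you pass to the full Chebyshev centre at radius $r_0=\inf_a r(a)$, which forces you to argue nonemptiness via a nested compactness argument, whereas the paper simply fixes $\delta=r(N)$ for a single non-diametral point $N$, so that $N$ itself witnesses that the intersection $\bigcap_{B\in M}E_{B,\delta}$ is nonempty.
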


\begin{proof} We use the approach suggested in \cite{BM48}. Assume the
contrary, let $\diam M=\alpha>0$. By Theorem \ref{T:non-diam} $M$
contains a non-diametral point $N$, so that
$M\subset\{A:~\rho(A,N)\le\delta\}$ for some $\delta<\alpha$.
Consider the set
$$O=\bigcap_{B\in M}E_{B,\delta}.$$
The set $O$ is non-empty because $N\in O$. The set $O$ is weakly compact and $\rho$-convex since each of the
balls $E_{B,\delta}$ is weakly compact  and $\rho$-convex. The set $O$ is a proper subset of $M$ since $M$ has
diameter $\alpha>\delta$.
\medskip

Since $G$ is a group of isometric transformations and $M$ is
invariant under each element of $G$, the action of $G$ on $M$ is
by isometric bijections. Therefore $O$ is $G$-invariant. We get a
contradiction with the minimality of $M$.
\end{proof}

\begin{proof}[{\bf Proof of Theorem \ref{main}}] The implication (i) $\Rightarrow$ (ii) is obvious. On the other
hand if $G(X_0)$ is separated from the boundary, for some $X_0\in \B$ then $\sup_{g\in G}\rho(0,g(X_0))< \infty$
whence, for each $X\in \B$, $\sup_{g\in G}\rho(0,g(X)) \le \sup_{g\in G}(\rho(0,g(X_0)) + \rho(g(X_0),g(X))) =
\sup_{g\in G}(\rho(0,g(X_0)) + \rho(X_0,X)) < \infty$. This means that the orbit $G(X)$ is separated from the
boundary. We proved that (i) $\Leftrightarrow$ (ii).
\medskip

The implication (i) $\Rightarrow$ (iv) can be derived from Lemmas
\ref{L:elliptic} and \ref{single} as follows. It is clear that
families of \WOT-compact $\rho$-convex $G$-invariant sets with the
finite intersection property have non-empty intersections which
are also \WOT-compact $\rho$-convex and $G$-invariant. Therefore,
by the Zorn Lemma, there is a minimal non-empty \WOT-compact
$\rho$-convex $G$-invariant set $M_0$. By Lemma \ref{single},
$M_0$ is a singleton and (iv) is proved.
\medskip

If (iv) is true and $A$ is a fixed point of $G$ then, $G_1 =
M_{-A}GM_{A}$ is a group of biholomorphic maps of $\B$ preserving
$0$. Hence it consists of restrictions to $\B$ of isometric linear
maps (see the beginning of Section \ref{S:distance} in this
connection). Thus $G_1$ is equicontinuous.

Note that each M\"obius transform is a Lipschitz map:
$\|M_A(X)-M_A(Y)\|\le C\|X-Y\|$ for each $X,Y\in \B$, where the
constant $C>0$ depends on $A$. Indeed setting $F(X) =
(A+X)(1+A^*X)^{-1}$ and $D = (1- \|A\|)^{-1}$ we have
\[\begin{split}\|F(X) -F(Y)\|&= \|(A+X)((1+A^*X)^{-1}- (1+A^*Y)^{-1}) + (X-Y)(1+A^*Y)^{-1}\|\\
& = \|(A+X)(1+A^*X)^{-1}A^*(Y-X)(1+A^*Y)^{-1}+(X-Y)(1+A^*Y)^{-1}\|\\
&\le 2D^2\|X-Y\| + D\|X-Y\|\le 3D^2\|X-Y\|.\end{split}\] Hence

$$\|M_A(X) -M_A(Y)\| = \|(1-AA^*)^{-1/2}(F(X) - F(Y))(1-A^*A)^{1/2}\|$$ $$ \le D^{\frac12}\|F(X)-F(Y)\|\le 3D^{\frac52}\|X-Y\|.$$

Since $G = M_AG_1M_{-A}$ and the maps $M_A$, $M_{-A}$ are Lipschitz, $G$ is also equicontinuous. We proved that
(iv) $\Rightarrow$ (iii).

Let now (iii) hold, we have to prove (ii). We will show that the orbit of $0$ is separated from the boundary.
Assuming the contrary we get that for any $\delta>0$ there is $g\in G$ with $\|g(0)\|>1-\delta$. Let $A = g(0)$;
we may assume that $\delta< 1/2$ so $\|A\|> 1/2$.

By the already mentioned result of \cite{Har}, $g = M_A\circ h$
where $h$ is a linear isometry. Let $P$ be the spectral projection
of $T = A^*A$ corresponding to the eigenvalue $\|T\|= \|A\|^2$
(recall that $T$ is an operator in a finite dimensional space).
Then
$$\|(1-T)P\| = 1-\|T\| \le 2(1- \|A\|) < 2\delta.$$ Set $X_1 = 0$,
$X_2 = h^{-1}(\frac{1}{2}AP)$. Then $\|X_2-X_1\| =
\frac{1}{2}\|AP\| = \|A\|/2 > 1/4$.

On the other hand
\[\begin{split}
\|g(X_2)-g(X_1)\|& = \left\|M_A\left(\frac{1}{2}AP\right) -
M_A(0)\right\|\\
&=
\left\|(1-AA^*)^{-1/2}\left(\frac{1}{2}AP+A\right)\left(1+\frac{1}{2}A^*AP\right)^{-1}(1-A^*A)^{1/2}-A\right\|\\
&=
\left\|A(1-T)^{-1/2}\left(\frac{1}{2}P+1\right)\left(1+\frac{1}{2}TP\right)^{-1}(1-T)^{1/2}
- A\right\|\\
&=
\left\|A\left(\frac{1}{2}P+1\right)\left(1+\frac{1}{2}TP\right)^{-1}
- A\right\| =
\left\|\frac{1}{2}A(1-T)P\left(1+\frac{1}{2}TP\right)^{-1}\right\|\\
& \le \frac{1}{2}\|A\|\|(1-T)P\| < \frac{1}{2}2\delta = \delta.\end{split}\]

This contradicts to the assumption of equicontinuity. Indeed for each $\delta$ we get points $Y_i = g(X_i)$ with
$\|Y_1-Y_2\|< \delta$ and $\|g^{-1}(Y_1) - g^{-1}(Y_2)\| > 1/4$. Thus (ii) holds.
\end{proof}

\section{Orthogonalization}\label{S:orthog}

\begin{theorem}\label{orth}
If a bounded representation $\pi$ of a group $G$ on a Hilbert space $\cH$ preserves a quadratic form $\eta$ with
finite number of negative squares then it is similar to a unitary representation.
\end{theorem}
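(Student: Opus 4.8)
The plan is to realize $G$ as an elliptic group of biholomorphic automorphisms of an operator ball of the kind treated in Theorem~\ref{main}, and then invoke that theorem. Write $J=P-Q$; this is a self-adjoint unitary with $\eta(x)=\langle Jx,x\rangle$, and I put $[x,y]=\langle Jx,y\rangle$ for the associated indefinite inner product, so that $(\cH,[\cdot,\cdot])$ is a Pontryagin space (with $\dim Q\cH$ negative squares). Set $\cH_+=P\cH$ and $\cH_-=Q\cH$: a finite-dimensional, separable pair with $\cH_+\perp\cH_-$ (I assume $\cH$ separable, the general case being covered by the non-separable version of Theorem~\ref{main} noted in the Remark). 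Polarizing the identity $\eta(\pi(g)x)=\eta(x)$ gives $\pi(g)^*J\pi(g)=J$, i.e.\ every $\pi(g)$ is $J$-unitary; since $\pi(g)^{-1}=\pi(g^{-1})\in\pi(G)$ and $\pi$ is bounded, there is $C\ge 1$ with $\|\pi(g)\|\le C$ and $\|\pi(g)^{-1}\|\le C$ for all $g$. The maximal uniformly negative subspaces of $(\cH,[\cdot,\cdot])$ are exactly the graphs $\cL_K=\{Kv+v:\ v\in\cH_-\}$ with $K\in L(\cH_-,\cH_+)$, $\|K\|<1$, so they are parametrized by the open operator ball $\B\subset L(\cH_-,\cH_+)$, which is precisely of the form treated in Theorem~\ref{main}.

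Being $J$-unitary with bounded inverse, each $\pi(g)$ maps maximal uniformly negative subspaces bijectively onto maximal uniformly negative subspaces, hence induces a transformation $\phi_g$ of $\B$; writing $\pi(g)=\left(\begin{smallmatrix}a_g&b_g\\ c_g&d_g\end{smallmatrix}\right)$ relative to $\cH=\cH_+\oplus\cH_-$ one has $\phi_g(K)=(a_gK+b_g)(c_gK+d_g)^{-1}$, a biholomorphic automorphism of $\B$ with inverse $\phi_{g^{-1}}$, and $g\mapsto\phi_g$ is a homomorphism (this is the classical correspondence between $J$-unitary operators and biholomorphic maps of operator balls; cf.\ \cite{Krein}). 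Next I would check that this action is elliptic --- the only place boundedness of $\pi$ enters. The point $0\in\B$ corresponds to $\cL_0=\cH_-$, and $\phi_g(0)=b_gd_g^{-1}$; for $v\in\cH_-$,
\[
\|b_gv\|^2-\|d_gv\|^2=\|P\pi(g)v\|^2-\|Q\pi(g)v\|^2=\eta(\pi(g)v)=\eta(v)=-\|v\|^2,
\]
so $\|d_gv\|^2=\|v\|^2+\|b_gv\|^2$ (hence $d_g$ is invertible with $\|d_g^{-1}\|\le 1$), and since $\|b_gv\|^2+\|d_gv\|^2=\|\pi(g)v\|^2\le C^2\|v\|^2$ we get $\|b_gv\|^2\le\tfrac12(C^2-1)\|v\|^2$, whence $\|\phi_g(0)\|^2\le\frac{C^2-1}{C^2+1}<1$ for every $g$. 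Thus the orbit of $0$ is separated from the boundary, $G$ is elliptic, and Theorem~\ref{main} supplies $K_*\in\B$ fixed by all $\phi_g$; equivalently, $\cL_*:=\cL_{K_*}$ is a $\pi(G)$-invariant maximal uniformly negative subspace.

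Finally I would convert this invariant definite decomposition into a similarity. The $J$-orthogonal complement $\cL_+:=\cL_*^{[\perp]}$ is maximal uniformly positive, and since each $\pi(g)$ preserves $[\cdot,\cdot]$ and $\cL_*$ it also preserves $\cL_+$; moreover $\cH=\cL_+ + \cL_*$ is a topological, $J$-orthogonal direct sum. Define $(x,y):=[x_+,y_+]-[x_-,y_-]$ for $x=x_++x_-$, $y=y_++y_-$ in $\cL_+ + \cL_*$; uniform definiteness of $\cL_\pm$ makes this an inner product equivalent to the original norm, whence $(x,y)=\langle Gx,y\rangle$ for some bounded positive invertible $G$, and since every $\pi(g)$ preserves both $[\cdot,\cdot]$ and the decomposition it preserves $(\cdot,\cdot)$, i.e.\ $\pi(g)^*G\pi(g)=G$. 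Setting $V=G^{1/2}$, the operators $V\pi(g)V^{-1}$ are isometric and invertible, hence unitary, which is the assertion.

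I expect the main obstacle to be the Pontryagin-space bookkeeping rather than any single hard estimate: one must check carefully that $J$-unitary operators with bounded inverse genuinely act on the ball of maximal uniformly negative subspaces by biholomorphic automorphisms forming a group, and that the fixed subspace $\cL_*$ carries the uniformly positive $J$-orthogonal complement and topological decomposition needed at the end. The ellipticity estimate, although it is the conceptual core (it is exactly where $\sup_g\|\pi(g)\|<\infty$ is used), is essentially a one-line computation once the angular-operator parametrization is in place.
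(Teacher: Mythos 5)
Your proposal is correct and follows essentially the same route as the paper: parametrize the maximal uniformly negative subspaces by the operator ball $\B\subset L(\cH_-,\cH_+)$ with $\dim\cH_-<\infty$, let each $\pi(g)$ act by the fractional-linear biholomorphic map $\phi_g$, deduce ellipticity from $\sup_g\|\pi(g)\|<\infty$, and apply Theorem~\ref{main} to get an invariant maximal negative subspace. The only (harmless) deviations are that you bound the single orbit of $0$ rather than every orbit --- which suffices by the equivalence (i)$\Leftrightarrow$(ii) of Theorem~\ref{main} --- and that you extract the similarity from the Gram operator of the invariant dual pair, whereas the paper conjugates by an explicit $\eta$-preserving operator carrying $L(D)$ onto $K$.
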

\begin{proof}
By our assumptions, $\cH = H_1\oplus H_2$, $\dim(H_2) < \infty$,
and $\eta(x) = \|Px\|^2 - \|Qx\|^2$ where $P,Q$ are the
projections onto $H_1$ and $H_2$ respectively. We write $H_1 = H$
and $H_2 = K$, for brevity.\medskip

We will relate to each invertible operator $T$ on $\cH$ preserving the form $\eta$ a biholomorphic map $w_T$ of
$\B$ in such a way that
\begin{equation}\label{comp}
w_{T_1T_2} = w_{T_1}\circ w_{T_2}.
\end{equation}

Let us call a subspace $L$ of $\cH$ {\it positive} ({\it negative}) if $\eta(y) > 0$ (respectively $\eta(y) <
0$) for all non-zero $y\in L$. Since each negative subspace $L$ is finite-dimensional, there is $\varepsilon
> 0$ such that
$$\eta(y)
\le - \varepsilon\|y\|^2  \text{ for all non-zero }y\in L.$$ The
supremum of all such $\varepsilon$ is called the {\it degree of
negativeness} of $L$ and is denoted by $\epsilon(L)$.
\medskip

For each operator $A\in \B$, the set $$L(A) = \{Ax\oplus x: x\in K\}$$ is a negative subspace of $\cH$.
Furthermore the condition $$ \eta(y) \le - \varepsilon \|y\|^2, \text{ for all }y\in L(A)$$
 means that
$$ - \|x\|^2 + \|Ax\|^2 \le -\varepsilon (\|x\|^2 + \|Ax\|^2)$$
for all $x\in K$. That is $$ \varepsilon\le \frac{1-\|A\|^2}{1+\|A\|^2}.$$ It follows that the degree of
negativeness of $L(A)$ is related to $\|A\|$ by the equality
\begin{equation}\label{degree2}
\varepsilon(L(A)) = \frac{1-\|A\|^2}{1+\|A\|^2}.
\end{equation}

Since $\dim(L(A)) = \dim(K)$, $L(A)$ is a maximal negative
subspace in $\cH$. Indeed if some subspace $M$ of $\cH$ strictly
contains $L(A)$ then its dimension is greater than codimension of
$H$, whence $M\cap H \neq \{0\}$. But all non-zero vectors in $H$
are positive.

Conversely, each maximal negative subspace $Q$ of $\cH$ coincides
with $L(A)$, for some $A\in \B$. Indeed, since $Q\cap H = \{0\}$,
there is an operator $A: K\to H$ such that each vector of $Q$ is
of the form $Ax\oplus x$. Since $Q$ is negative, we have
$\eta(Ax\oplus x)=\|Ax\|^2-\|x\|^2<0$, and therefore $\|A\| < 1$,
so $A\in \B$. Thus $Q\subset L(A)$; and, by maximality, $Q =
L(A)$.

It is easy to see that the map $A\to L(A)$ from $\B$ to the set $\E$ of all maximal negative subspaces is
injective and therefore bijective.

Now we can define $w_T$. Indeed, if a subspace $L$ of $\cH$ is
maximal negative then its image $TL$ under $T$ is also maximal
negative (because $T$ is invertible and preserves $\eta$). Hence,
for each $A\in \B$, there is $B\in \B$ such that $L(B) = TL(A)$.
We let $w_T(A)=B$.

The equality (\ref{comp}) follows easily because $L(w_{T_1}(w_{T_2}(A))) = T_1L(w_{T_2}(A)) = T_1T_2L(A) =
L(w_{T_1T_2}(A))$  and the map $A\to L(A)$ is injective.

Our next goal is to check that $w_T$ is biholomorphic. Since $w_T^{-1} = w_{T^{-1}}$ it suffices to show that
$w_T$ is holomorphic.

Let $T = (T_{ij})_{i,j=1}^2 $ be the matrix of $T$ with respect to the decomposition $\cH = H_1\oplus H_2$. Then
$T(Ax\oplus x) = (T_{11}Ax + T_{12}x)\oplus(T_{21}Ax + T_{22}x)$. Since $T(Ax\oplus x) \in L(w_T(A))$, we
conclude that
$$w_T(A)(T_{21}Ax + T_{22}x) = T_{11}Ax + T_{12}x.$$
Thus
\begin{equation}\label{fraclin}
w_T(A) = (T_{11}A + T_{12})(T_{21}A + T_{22})^{-1}.
\end{equation}
This shows that $w_T$ is a holomorphic map on $\B$.

Suppose now that $\pi$ is a bounded representation of a group $G$ on $\cH$ preserving $\eta$. Then $W =
\{w_{\pi(g)}: g\in G\}$ is a group of biholomorphic maps of $\B$. Moreover since $\pi$ is bounded, the group $W$
is elliptic. To see this, note that for  each negative subspace $L$, one has
$$\eta(y)\le -\varepsilon(L)\|y\|^2 \text{ for all }y\in L.$$
If $T$ is an invertible operator preserving $\eta$ then $T^{-1}x\in L$, for each $x\in TL$, whence
$$\eta(x)= \eta(T^{-1}x) \le -\varepsilon(L)\|T^{-1}x\|^2 \le -\varepsilon(L)\|T\|^{-2}\|x\|^2.$$
Thus
$$\varepsilon(TL)\ge \varepsilon(L)\|T\|^{-2}.$$
For $L=L(A)$, $TL = L(w_T(A))$. This gives
$$\frac{1-\|w_T(A)\|^2}{1+\|w_T(A)\|^2} \ge  \|T\|^{-2} \frac{1-\|A\|^2}{1+\|A\|^2}$$
if one takes into account (\ref{degree2}). Thus,  if
$\|\pi(g)\|\le C$ for all $g\in G$, then
$$\frac{1-\|w_{\pi(g)}(A)\|^2}{1+\|w_{\pi(g)}(A)\|^2} \ge  C^{-2} \frac{1-\|A\|^2}{1+\|A\|^2}.$$
Therefore $$ 1-\|w_{\pi(g)}(A)\|^2 \ge C^{-2}
\frac{1-\|A\|^2}{1+\|A\|^2}$$ and
 $$\sup_{g\in G}\|w_{\pi(g)}(A)\| < 1$$
 for each $A\in \B$.

By Theorem \ref{main}, there is $D\in \B$ with $w_{\pi(g)}(D) = D$ for all $g\in G$. Hence $\pi(g)L(D) = L(D)$
for all $g\in G$.

Let $U$ be an operator on $\cH$ with the matrix $(U_{ij})$ where $U_{11} = (1_H-DD^*)^{-1/2}$, $U_{12} = -
D(1_K-D^*D)^{-1/2}$, $U_{21} = -D^*(1_H-DD^*)^{-1/2}$, $U_{22} = (1_K-D^*D)^{-1/2}$.  It can be checked that $U$
preserves  $\eta$ and maps $L(D)$ onto $K$. Then all operators $\tau(g) = U\pi(g)U^{-1}$ preserve $\eta$, and
the subspace $K$ is invariant for them. It follows that $H$ is also invariant for operators $\tau(g)$. Hence
these operators preserve the scalar product on $\cH$. Thus $g\mapsto \tau(g)$ is a unitary representation
similar to $\pi$.
\end{proof}

It should be noted that Theorem \ref{orth} does not extend to the
case when $\eta$ has infinite number of negative (and positive)
squares, that is, to the case that both $H_1$ and $H_2$ are
infinite-dimensional \cite{Shul-77}.

\section{$J$-unitary operators on Pontryagin spaces}

The Pontryagin space is a linear space $\cH$ supplied with an
indefinite scalar product $x,y\to [x,y]$ which has a finite number
of negative squares. More precisely this means that one can choose
a usual scalar product $x,y \to (x,y)$ with respect to which $\E$
is a Hilbert space and $[x,y] = (Jx,y)$, where $J$ is a selfadjoint
involutive operator on this Hilbert space with $\text{rank} (1-J)
< \infty$. An invertible operator $T$ on $\E$ is called $J$-{\it
unitary} if $[Tx,Ty] = [x,y]$ for all $x,y\in \E$.

It should be noted that the terminology does not seem to be
successful because the choice of the operator $J$ and the
corresponding scalar product is not unique while the set of
$J$-unitary operator is completely determined by the original
indefinite scalar product $[\cdot,\cdot]$. However, this
terminology is widely used (see, for example, \cite{Az-Iohv},
\cite{Kiss-Shul} and references therein). It is important that all
scalar products defining $[\cdot,\cdot]$ via $J$-operators are
equivalent, so one can speak, for example, about boundedness of a
set of operators, without indicating which scalar product is
chosen.

A subspace $X\subset \E$ is called {\it positive} ({\it negative})
if $[x,x]
> 0$ (respectively $[x,x] < 0$) for all $x\in X$. A {\it dual pair
of subspaces} in $\E$ is a pair $Y,Z$, where $Y$ is a positive
subspace, $Z$ is a negative subspace and $Y+Z = \E$. The study of
dual pairs invariant for a given set of $J$-unitary operators was
started by Sobolev and intensively developed by Pontryagin, Krein,
Phillips, Naimark and other prominent mathematicians.

The previous theorem on the orthogonalization of representations
implies the following result.

\begin{corollary}\label{Pontr}
A group of $J$-unitary operators on a Pontryagin space has an invariant dual pair if and only if it is bounded.
\end{corollary}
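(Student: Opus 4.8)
The plan is to prove the two implications separately: ``bounded $\Rightarrow$ invariant dual pair'' is essentially a restatement of Theorem~\ref{orth}, while the converse is elementary once one knows that the positive member of the pair is uniformly positive. Fix the auxiliary Hilbert inner product $(\cdot,\cdot)$ and the involution $J$ with $[x,y]=(Jx,y)$, and write $J=P-Q$ with $P,Q$ complementary orthogonal projections; since $1-J=2Q$ we have $\dim(Q\E)=\operatorname{rank}(1-J)=:\kappa<\infty$, and $\eta(x):=[x,x]=\|Px\|^2-\|Qx\|^2$ is a quadratic form with $\kappa$ negative squares. Assume a group $\mathcal G$ of $J$-unitary operators on $\E$ is bounded. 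The identity embedding is then a bounded representation of $\mathcal G$ preserving $\eta$, so Theorem~\ref{orth} applies; in fact its proof produces $D$ in the unit ball of $L(Q\E,P\E)$ with $T\,L(D)=L(D)$ for every $T\in\mathcal G$, where $L(D)$ is a maximal negative subspace of $\E$ of dimension $\kappa$. Put $Z=L(D)$ and $Y=Z^{[\perp]}=\{y\in\E:[y,z]=0\text{ for all }z\in Z\}$. For $T\in\mathcal G$, $y\in Y$ and $z\in Z$ we have $[Ty,z]=[y,T^{-1}z]=0$ since $T^{-1}Z=Z$, so $TY=Y$ and $Y$ is $\mathcal G$-invariant. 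As $Z$ is finite dimensional with $[\cdot,\cdot]|_Z$ non-degenerate, $\E=Z+Y$ is a $[\cdot,\cdot]$-orthogonal algebraic direct sum; since all $\kappa$ negative squares of $\eta$ already occur on $Z$, the form $[\cdot,\cdot]|_Y$ is positive semi-definite.

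To finish this implication I would show $Y$ is \emph{uniformly} positive, and here the finite rank of $Q$ is decisive. The subspace $Y$ is closed, being an intersection of kernels of the continuous functionals $y\mapsto[y,z]$, $z\in Z$. On the Hilbert space $Y$ let $G$ be the self-adjoint operator with $(Gy,y)_Y=[y,y]$; a short computation gives $1_Y-G=2P_YQ|_Y$, a positive operator of rank at most $\kappa$, so $\sigma(G)$ is finite. Since $G\ge 0$ (by the previous paragraph) and $\ker G=\{0\}$ (a vector killed by $G$ is $[\cdot,\cdot]$-orthogonal to $Y$ and to $Z$, hence to $\E$, hence zero), $G$ is bounded below by some $\varepsilon>0$, i.e.\ $[y,y]\ge\varepsilon\|y\|^2$ on $Y$. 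Thus $(Y,Z)$ is a $\mathcal G$-invariant dual pair, which proves that a bounded group of $J$-unitary operators has an invariant dual pair.

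For the converse, let $\mathcal G$ leave invariant a dual pair $Y,Z$ ($Y$ positive, $Z$ negative, $Y+Z=\E$). Then $Z$ is finite dimensional (a negative subspace of a space with $\kappa$ negative squares has dimension at most $\kappa$), and $Y$ is closed: if $w_n\in Y$, $w_n\to z_0\in Z$ and $z_0\ne 0$, then $[w_n,w_n]\to[z_0,z_0]<0$ by continuity of $[\cdot,\cdot]$, contradicting $[w_n,w_n]>0$ for large $n$; the general case reduces to this after subtracting the $Y$-component of the limit. Hence $\E=Y\oplus Z$ is a topological direct sum, and the finite-rank argument above ($G=P_YJ|_Y$, with $G\ge 0$ and $\ker G=\{0\}$ because $Y$ is positive) shows $Y$ is uniformly positive. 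Now every $T\in\mathcal G$ restricts to a linear bijection of $Y$ preserving $[\cdot,\cdot]|_Y$, which is an inner product on $Y$ equivalent to $(\cdot,\cdot)|_Y$, so $\sup_{T}\|T|_Y\|<\infty$; likewise $T|_Z$ is a linear isometry of the finite dimensional inner product space $(Z,-[\cdot,\cdot]|_Z)$, so $\sup_T\|T|_Z\|<\infty$. Since the projections of $\E=Y\oplus Z$ onto its summands are bounded, $\sup_{T\in\mathcal G}\|T\|<\infty$.

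The main obstacle is the first implication, and within it the step that turns the fixed-point conclusion of Theorem~\ref{orth} into an honest $[\cdot,\cdot]$-orthogonal splitting of $\E$ into invariant, uniformly definite subspaces; the finite-rank perturbation argument upgrading ``$[\cdot,\cdot]$ positive definite on $Y$'' to ``uniformly positive'' is the technical heart of the matter, and it is exactly what makes a Pontryagin space, rather than a general Krein space, the correct setting. The converse is routine once the closedness and uniform positivity of $Y$ are in hand.
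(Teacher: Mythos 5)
Your proof is correct, but the forward implication follows a genuinely different route from the paper's. The paper uses the \emph{statement} of Theorem~\ref{orth}: it takes the invertible $T$ with $\tau(g)=T^{-1}gT$ unitary, forms the invertible selfadjoint operator $R=T^*JT$, checks that $R$ commutes with the unitary group $\tau(G)$, and obtains the invariant dual pair as the images under $T$ of the positive and negative spectral subspaces of $R$ (this is \cite[Theorem 5.8]{Kiss-Shul}, whose proof the authors reproduce). You instead reach into the \emph{proof} of Theorem~\ref{orth}, extract the invariant maximal negative subspace $Z=L(D)$ directly, set $Y=Z^{[\perp]}$, and verify by hand that $Y$ is closed, invariant, positive, and in fact uniformly positive via the finite-rank perturbation $G=1_Y-2P_YQ|_Y$ together with $\ker G=\{0\}$. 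Both arguments are sound; yours is more self-contained (no similarity $T$, no spectral subspaces of $R$) but relies on an intermediate object of the proof of Theorem~\ref{orth} rather than on its conclusion. A side benefit of your version is the converse: the paper's one-line argument (the scalar product $[h_1,h_2]-[k_1,k_2]$ is $G$-invariant, hence $G$ is unitary, hence bounded) implicitly needs that this new scalar product defines an equivalent norm, i.e.\ exactly the closedness of $Y$ and the uniform definiteness that you establish explicitly. Only a cosmetic remark: you use the letter $G$ both for the group and for the Gram operator on $Y$; rename one of them.
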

\begin{proof}
Choose a scalar product $(\cdot,\cdot)$ and the corresponding
operator $J$. Denote by $\cH$ the Hilbert space $(\E,(\cdot , \cdot))$.
Since $J$ is an Hermitian involutive operator, there are
orthogonal subspaces $H$, $K$ of $\cH$ such that $J = P_H - P_K$.
By our assumption on $J$, the subspace $K$ is finite-dimensional.

Let $G$ be a group of $J$-unitary operators. If it is bounded,
then the identity map can be regarded as a bounded representation
of $G$ on $\cH$. Moreover it preserves the form $\eta(x) = [x,x]$.
Since it has a finite number of negative squares, Theorem
\ref{orth} implies that there is an invertible operator $V$ such
that the representation $\tau(g) = T^{-1}gT$ is unitary. It
follows from \cite[Theorem 5.8]{Kiss-Shul} that $G$ has an
invariant dual pair of subspaces.
\medskip

For completeness we include the proof of this fact. Passing to
adjoints in the equality $T\tau(g) = gT$ and taking into account
that $g^* = Jg^{-1}J$, $\tau(g)^* = \tau(g^{-1})$ we obtain that
$\tau(g^{-1})T^* = T^*Jg^{-1}J$. Using this identity for $g$
instead of $g^{-1}$ and multiplying both sides by $JT$ we get:
$$\tau(g)T^*JT = T^*JgJJT = T^*JgT = T^*JT\tau(g).$$

Thus the invertible selfadjoint operator $R = T^*JT$ commutes with
the group $\tau(G)$ of unitary operators. It follows that its
spectral subspaces $H_1$ and $K_1$ corresponding to positive and
negative parts of spectrum are invariant for $\tau(G)$. Note that
$(Rx,x) > 0$ for $x\in T^{-1}H\backslash\{0\}$ and $(Rx,x) < 0$
for $x\in T^{-1}K\backslash\{0\}$. It follows that $\dim K_1 =
\dim K$. Now the subspaces $H_2 = TH_1$ and $K_2 = TK_1$ form an
invariant dual pair for $G$.

The converse implication is simple. If $G$ has an invariant dual
pair $H,K$ then the scalar product $(h_1+k_1,h_2+k_2) = [h_1,h_2]
- [k_1,k_2]$ is invariant for $G$. Thus $G$ is a group of unitary
operators on $\cH = (\E, (\cdot,\cdot))$, hence it is bounded.
\end{proof}

As a consequence we obtain the following result proved in
\cite{Shul-77}:
\begin{corollary}
A $J$-symmetric representation of a unital $C^*$-algebra on a
Pontryagin space is similar to a ${}^*$-representation.
\end{corollary}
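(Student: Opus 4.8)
The plan is to deduce the corollary from the preceding one by viewing a $J$-symmetric representation of a unital $C^*$-algebra $\cA$ on a Pontryagin space as a source of a bounded group of $J$-unitary operators, then applying Corollary \ref{Pontr}. First I would recall the setup: a $J$-symmetric representation $\pi$ of $\cA$ on a Pontryagin space $\Pi_k$ satisfies $\pi(a^*) = \pi(a)^{[*]}$, where $[*]$ denotes the adjoint with respect to the indefinite inner product $[\cdot,\cdot]$; equivalently, in terms of a fixed Hilbert scalar product $(\cdot,\cdot)$ with $[x,y]=(Jx,y)$, one has $J\pi(a^*) = \pi(a)^*J$. The key observation is that for a \emph{unitary} element $u\in\cA$, the operator $\pi(u)$ is $J$-unitary: from $\pi(u^*) = \pi(u)^{[*]}$ and $u^*u = uu^* = 1$ one gets $[\pi(u)x,\pi(u)y] = [\pi(u)^{[*]}\pi(u)x,y] = [\pi(u^*u)x,y] = [x,y]$. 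Thus the image under $\pi$ of the unitary group $\mathcal{U}(\cA)$ is a group $G$ of $J$-unitary operators on $\Pi_k$.

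The next step is to check that $G$ is bounded. Since $\pi$ is $J$-symmetric and $J$ is bounded and invertible, $\pi$ is automatically a bounded $*$-homomorphism in a suitable sense: more precisely, $\pi$ extended/considered with respect to the Hilbert norm is a (not necessarily $*$-preserving) algebra homomorphism of the $C^*$-algebra $\cA$ into $B(\cH)$, and such homomorphisms are automatically norm-continuous; hence $\|\pi(u)\| \le \|\pi\|\,\|u\| = \|\pi\|$ for every unitary $u$, uniformly. (Alternatively one argues directly: $\|\pi(u)\|^2 = \|\pi(u)^*\pi(u)\| = \|J\pi(u^*)J\pi(u)\| = \|J\pi(u^*u)J\|\cdot(\text{bounded factors})$, which is bounded independently of $u$ because $\pi$ is norm-bounded on $\cA$ and $u^*u=1$.) Either way, $\sup_{u\in\mathcal{U}(\cA)}\|\pi(u)\| < \infty$, so $G$ is a bounded group of $J$-unitary operators.

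By Corollary \ref{Pontr}, $G$ admits an invariant dual pair, and more to the point, tracing through the proof of that corollary: the boundedness of $G$ yields (via Theorem \ref{orth} applied to the group representation $u\mapsto\pi(u)$) an invertible operator $T$ such that $u\mapsto T^{-1}\pi(u)T$ is a unitary representation of $\mathcal{U}(\cA)$; equivalently $R = T^*JT$ is a positive-... no — $R$ is an invertible selfadjoint operator commuting with all $T^{-1}\pi(u)T$. I would then observe that $R$ in fact commutes with the whole algebra $T^{-1}\pi(\cA)T$: since $\mathcal{U}(\cA)$ spans $\cA$ linearly (every element of a unital $C^*$-algebra is a linear combination of four unitaries) and $R$ commutes with each $T^{-1}\pi(u)T$, it commutes with $T^{-1}\pi(a)T$ for every $a\in\cA$. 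Writing $S = |R|^{1/2}$ (or using the polar-type decomposition of $R$), the operator $V = S\,T^{-1}$ intertwines $\pi$ with a $*$-representation: one checks, exactly as in the displayed computation in the proof of Corollary \ref{Pontr} (passing to adjoints in $T\tau(a) = \pi(a)T$ and using $\pi(a)^* = J\pi(a^*)J$), that $\sigma(a) := V\pi(a)V^{-1}$ satisfies $\sigma(a)^* = \sigma(a^*)$, i.e. $\sigma$ is a $*$-representation similar to $\pi$.

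The main obstacle I anticipate is the bookkeeping needed to pass from "commutes with the unitary group" to an honest $*$-representation, i.e. correctly choosing the intertwiner $V$ built from $R = T^*JT$ and verifying the adjoint identity $\sigma(a^*) = \sigma(a)^*$ uniformly in $a\in\cA$ rather than merely for unitaries. This requires the identity $\pi(a)^* = J\pi(a^*)J$ (the $J$-symmetry of $\pi$) together with the fact that $R$ commutes with $T^{-1}\pi(\cA)T$, and then mimicking the final paragraph of the proof of Corollary \ref{Pontr} verbatim but for the algebra rather than the group. Everything else — that $\pi(\mathcal{U}(\cA))$ consists of $J$-unitaries, that it is bounded, that unitaries span $\cA$ — is routine, and the heavy lifting (the fixed-point theorem, hence Theorem \ref{orth}) has already been done.
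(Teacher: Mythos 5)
Your proposal takes exactly the paper's route: the paper's entire proof is the one-line observation that restricting $\pi$ to the unitary group of the $C^*$-algebra gives a bounded group of $J$-unitary operators to which Corollary \ref{Pontr} applies, and your extra bookkeeping (every element of a unital $C^*$-algebra is a linear combination of unitaries, so the invariant dual pair, equivalently the operator $R=T^*JT$, works for all of $\pi(\cA)$ and yields a genuine $*$-representation) is the correct implicit completion of that argument. One caveat on the boundedness step: of your two justifications, the first overstates automatic continuity (a mere algebra homomorphism of a $C^*$-algebra into $B(\cH)$ need not be norm-continuous), and the second contains an algebraic slip, since $J\pi(u^*)J\pi(u)$ cannot be rewritten as $J\pi(u^*u)J$ times bounded factors because the middle $J$ does not commute past $\pi(u^*)$; the intended (and the paper's implicit) point is simply that a representation is taken to be norm-continuous, whence $\|\pi(u)\|\le\|\pi\|\,\|u\|=\|\pi\|$ uniformly over unitaries.
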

For a proof it suffices to notice that restricting the
representation to the unitary group of the $C^*$-algebra we obtain
a bounded group of $J$-unitary operators.

\section{Appendix: Hyperbolicity of $\B$ (after  Itai
Shafrir)}\label{S:appendix}

For any bounded domains $D_1,D_2$ of complex Banach spaces we
denote by $Hol(D_1,D_2)$ the set of all holomorphic maps from
$D_1$ to $D_2$. If $D_1 = D_2 = D$ then $Hol(D_1,D_2)$ is a
semigroup with respect to the composition, and by $Aut(D)$ we
denote the set of all its invertible elements (biholomorphic
automorphisms of $D$). The group $Aut(\B)$ acts transitively on
$\B$. Indeed, for each $A\in \B$ the M\"obius transform $M_A$ is
biholomorphic and sends $0$ to $A$

As usually the Carath\'eodory metric on $\B$ is defined by the
equality:
$$c_{\B}(A,B) = \sup\{\omega(f(A),f(B)): f\in Hol(\B,\Delta)\}$$
where $\Delta$ is the unit disk and $\omega$ is the Poincar\'e
distance:
$$\omega(z_1,z_2) = \tanh^{-1}\left|\frac{z_1-z_2}{1-\overline{z_1}z_2}\right|.$$
As it was mentioned in Section 4, $c_{\B}$ coincides with the metric $\rho$ defined by the formula
(\ref{dist0}). Clearly $c_{\B}$ is invariant under biholomorphic maps of $\B$.

We shall prove that $\B$ is a hyperbolic space with respect to
this metric.

Furthermore the differential Carath\'eodory metrics on $\B$ is
defined by
\begin{equation}\label{metr}
\alpha(A,V) = \sup_{f\in Hol(\B,\Delta)}\frac{|\D f(A)V|}{1-|f(A)|^2}
\end{equation}
for all $A\in \B, V\in L(K,H)$, where $\D f(A)$ is the
differential of $f$ in $A$ (see \cite{Vesent}, where $\alpha$ is
denoted by $\gamma_{\B}$).

\bigskip

\begin{lemma}\label{differential}
For each $A\in\B$, $V\in L(K,H)$
\begin{equation}\label{diff}
\D M_B(A)V =
(1-BB^*)^{1/2}(1+AB^*)^{-1}V(1+B^*A)^{-1}(1-B^*B)^{1/2}.
\end{equation}
In particular,
$$\D M_B(0)V = (1-BB^*)^{1/2}V(1-B^*B)^{1/2}.$$
\end{lemma}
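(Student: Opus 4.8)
The plan is to compute the Gâteaux derivative $\frac{d}{dt}\big|_{t=0}M_B(A+tV)$ directly from the formula \eqref{mobius} (written with $B$ in the role of the base point); since $M_B$ is holomorphic on $\B$, this Gâteaux derivative coincides with $\D M_B(A)V$. The two outer factors $(1-BB^*)^{-1/2}$ and $(1-B^*B)^{1/2}$ do not depend on the variable, so it suffices to differentiate $F(X):=(B+X)(1+B^*X)^{-1}$ and then conjugate: $\D M_B(A)V=(1-BB^*)^{-1/2}\,\D F(A)V\,(1-B^*B)^{1/2}$. Here I would first note that for every $X\in\B$ one has $\|B^*X\|<1$, so $1+B^*X$ (and likewise $1+AB^*$, $1+B^*A$) is invertible, and that $t\mapsto(1+B^*A+tB^*V)^{-1}$ is differentiable at $0$ with derivative $-(1+B^*A)^{-1}B^*V(1+B^*A)^{-1}$.

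Applying the product rule to $F(A+tV)=(B+A+tV)(1+B^*A+tB^*V)^{-1}$ gives
\[
\D F(A)V=V(1+B^*A)^{-1}-(B+A)(1+B^*A)^{-1}B^*V(1+B^*A)^{-1}=\bigl(1-(B+A)(1+B^*A)^{-1}B^*\bigr)V(1+B^*A)^{-1}.
\]
The key algebraic step is then the identity
\[
1-(B+A)(1+B^*A)^{-1}B^*=(1-BB^*)(1+AB^*)^{-1},
\]
which I would prove by multiplying on the right by $1+AB^*$ and using the intertwining relation $B^*(1+AB^*)=(1+B^*A)B^*$: the left-hand side then reduces to $(1+AB^*)-(B+A)B^*=1-BB^*$. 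Substituting this back into the expression for $\D F(A)V$, then into that for $\D M_B(A)V$, and absorbing $(1-BB^*)^{-1/2}(1-BB^*)=(1-BB^*)^{1/2}$ yields exactly \eqref{diff}; putting $A=0$ gives the displayed special case since $1+0\cdot B^*=1$.

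The whole argument is routine differentiation; the only things needing a modicum of care are the noncommutative bookkeeping in the product rule and the intertwining identity $B^*(1+AB^*)=(1+B^*A)B^*$, which I would record explicitly before using it. I do not foresee any genuine obstacle here — the lemma is included only to have an explicit form of $\D M_B$ on hand for later estimates (e.g. for the differential Carath\'eodory metric \eqref{metr}).
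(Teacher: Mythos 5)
Your proposal is correct and follows essentially the same route as the paper's proof: differentiate $(B+X)(1+B^*X)^{-1}$ via the product rule and the expansion of the inverse, then use the intertwining relation $(1+B^*A)^{-1}B^*=B^*(1+AB^*)^{-1}$ to collapse $1-(B+A)(1+B^*A)^{-1}B^*$ to $(1-BB^*)(1+AB^*)^{-1}$ and absorb one factor of $1-BB^*$ into the outer $(1-BB^*)^{-1/2}$. No gaps; the verification that $1+B^*X$ is invertible and the holomorphy remark are the same implicit ingredients the paper relies on.
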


\begin{proof}
By definition, $M_B(X) =
(1-BB^*)^{-1/2}(B+X)(1+B^*X)^{-1}(1-B^*B)^{1/2}$. We have to
calculate the coefficient $c$ of $t$ in the Taylor decomposition
of the function $t\to M_B(A+tV)$. For this, note that if $P$ is an
invertible operator then $(P+tQ)^{-1} = P^{-1} -tP^{-1}QP^{-1} +
o(t)$. It follows immediately that \[\begin{split}c
&=(1-BB^*)^{-1/2}(V(1+B^*A)^{-1} -
(B+A)(1+B^*A)^{-1}B^*V(1+B^*A)^{-1})(1-B^*B)^{1/2}\\
&=(1-BB^*)^{-1/2}(1-(B+A)(1+B^*A)^{-1}B^*)V(1+B^*A)^{-1}(1-B^*B)^{1/2}\\
&=(1-BB^*)^{-1/2}(1-(B+A)B^*(1+
AB^*)^{-1})V(1+B^*A)^{-1}(1-B^*B)^{1/2}\\
&=(1-BB^*)^{-1/2}((1+AB^*-(B+A)B^*)(1+AB^*)^{-1})V(1+B^*A)^{-1}(1-B^*B)^{1/2}\\
&=(1-BB^*)^{1/2}(1+AB^*)^{-1}V(1+B^*A)^{-1}(1-B^*B)^{1/2}.\end{split}\]
\end{proof}

\begin{lemma}\label{alpha}
$\alpha(A,V) = \|(1-AA^*)^{-1/2}V(1-A^*A)^{-1/2}\|$ for all $A\in \B$ and $V\in L(K,H)$.
\end{lemma}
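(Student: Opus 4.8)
The plan is to compute the differential Carath\'eodory metric $\alpha(A,V)$ by reducing to the already-solved case $A=0$ via a M\"obius transformation, and then to identify the $A=0$ value as the operator norm $\|V\|$. First I would recall that, exactly as for the non-differential metric, $\alpha$ is invariant under biholomorphic automorphisms: if $\Phi\in Aut(\B)$ then $\alpha(\Phi(A),\D\Phi(A)V)=\alpha(A,V)$, because composing the test functions $f\in Hol(\B,\Delta)$ with $\Phi$ and $\Phi^{-1}$ simply permutes the family over which the supremum in \eqref{metr} is taken (and the chain rule handles the differentials). Applying this with $\Phi=M_{-A}$, which sends $A$ to $0$, gives $\alpha(A,V)=\alpha(0,\D M_{-A}(A)V)$. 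By Lemma \ref{differential} (with $B=-A$), $\D M_{-A}(A)V=(1-AA^*)^{1/2}(1-AA^*)^{-1}V(1-A^*A)^{-1}(1-A^*A)^{1/2}=(1-AA^*)^{-1/2}V(1-A^*A)^{-1/2}$. So it remains only to prove the base case $\alpha(0,W)=\|W\|$ for all $W\in L(K,H)$.

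For the base case, the inequality $\alpha(0,W)\le\|W\|$ follows from the Schwarz–Pick lemma on the disk: any $f\in Hol(\B,\Delta)$ satisfies $|\D f(0)W|/(1-|f(0)|^2)\le$ the infinitesimal Poincar\'e length of $\D f(0)W$ at $f(0)$, and since $f$ maps the unit ball into $\Delta$ one gets $|\D f(0)W|\le(1-|f(0)|^2)\|W\|$ by applying the one-dimensional Schwarz lemma to the holomorphic function $\zeta\mapsto f(\zeta W/\|W\|)$ on $\Delta$ (after composing with a disk automorphism moving $f(0)$ to $0$). For the reverse inequality $\alpha(0,W)\ge\|W\|$, I would exhibit an explicit test functional: pick unit vectors $\xi\in K$, $\eta\in H$ with $\langle W\xi,\eta\rangle$ close to $\|W\|$, and set $f(X)=\langle X\xi,\eta\rangle$. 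This $f$ is linear, hence holomorphic, with $\|f(X)\|\le\|X\|<1$ on $\B$, so $f\in Hol(\B,\Delta)$, $f(0)=0$, and $\D f(0)W=\langle W\xi,\eta\rangle$, giving $\alpha(0,W)\ge|\langle W\xi,\eta\rangle|$; taking the supremum over such $\xi,\eta$ yields $\alpha(0,W)\ge\|W\|$.

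I do not expect a serious obstacle here; the only point requiring a little care is the invariance of the infinitesimal metric under $Aut(\B)$, which must be invoked with the correct chain-rule bookkeeping, and the verification that the linear functional $X\mapsto\langle X\xi,\eta\rangle$ genuinely maps $\B$ into the \emph{open} disk $\Delta$ (immediate from $|\langle X\xi,\eta\rangle|\le\|X\|<1$). Everything else is a direct substitution using Lemma \ref{differential}. Assembling these pieces: $\alpha(A,V)=\alpha(0,(1-AA^*)^{-1/2}V(1-A^*A)^{-1/2})=\|(1-AA^*)^{-1/2}V(1-A^*A)^{-1/2}\|$, which is the claim.
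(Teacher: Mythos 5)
Your proposal is correct and follows essentially the same route as the paper: reduce to the base point via M\"obius invariance of the infinitesimal metric, compute the differential from Lemma \ref{differential}, and use $\alpha(0,W)=\|W\|$. The only difference is cosmetic — the paper cites \cite[Lemma V.1.5]{Vesent} and \cite[Proposition V.1.2]{Vesent} for the two ingredients (the value at $0$ and the $Aut(\B)$-invariance), whereas you supply short self-contained proofs of both, which is perfectly fine.
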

\begin{proof}
By \cite[Lemma V.1.5]{Vesent}
$$\alpha(0,V)= \|V\|.$$
Let now $A$ be arbitrary. Then by \cite[Proposition~V.1.2]{Vesent}
$$\alpha(M_A(0),\mathcal D M_A(0)X)=\alpha(0,X)=||X||.$$ On the
other hand, by Lemma~\ref{differential},
$$\alpha(M_A(0),\mathcal D M_A(0)X)=\alpha(A,
(1-AA^*)^{1/2}X(1-A^*A)^{1/2}).$$ Setting now
$V=(1-AA^*)^{1/2}X(1-A^*A)^{1/2}$, we obtain
$X=(1-AA^*)^{-1/2}V(1-A^*A)^{-1/2}$ and hence
$\alpha(A,V)=||(1-AA^*)^{-1/2}V(1-A^*A)^{-1/2})||$. \end{proof}


For any bounded operator $D$, set $D^{(1)} = D$, $D^{(3)} = DD^*D$,
$D^{(5)} = DD^*DD^*D,\ldots,$ $D^{(2k+1)} = (DD^*)^kD$.

Let
\begin{equation}\label{Th}
\text{Th }D = \sum_{n=0}^{\infty}a_{2n+1}D^{(2n+1)}
\end{equation}
where $a_j$ are the Taylor coefficients of $\tanh t$, i.e.,  $\tanh
t = \sum_{n=0}^{\infty}a_{2n+1}t^{2n+1}$.

It follows from the definition that $\text{Th }D = \tanh(D)$ if
$D$ is selfadjoint.

If $D = J|D|$ is the polar decomposition of $D$ (that is,
$|D|=(D^*D)^{1/2}$ and  $J$ is a partial isometry such that $(J^*J)|D| = |D| (J^*J) = |D|$), then
$$D^{(2n+1)} = J|D|^{2n+1},$$ and
hence
$$\text{Th }D = J\tanh |D|.$$
On the other hand, we can write $D = |D^*|J$, where $|D^*| =
J|D|J^*=(DD^*)^{1/2}$, therefore
$$\text{Th }D = (\tanh |D^*|)J.$$

\bigskip
For the  space $(\B,\rho)$ we define the set $\M$ of lines as
follows: for $A\in \B$, $D\in \partial \B$ (i.e., $||D||=1$) we
let
\begin{equation}\label{line }
\gamma_{A,D} = \{\gamma_{A,D}(t):=M_A(\text{Th}(tD)):t\in \mathbb{R}\}
\end{equation}
and set
$$\M=\{\gamma_{A,D}:A\in\B,D\in \partial \B\}.$$
\begin{proposition}\label{metricline}
$\gamma_{A,D}$ is a metric line.
\end{proposition}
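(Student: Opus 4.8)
The goal is to show that $\gamma_{A,D}$, parametrized by $t \mapsto M_A(\mathrm{Th}(tD))$, is isometric to $\mathbb{R}$ with its usual metric; that is, $\rho(\gamma_{A,D}(s),\gamma_{A,D}(t)) = |s-t|$ for all $s,t\in\mathbb{R}$. Since $M_A$ is a $\rho$-isometry (it is a biholomorphic automorphism, and $\rho$ is invariant), it suffices to treat the case $A=0$, i.e.\ to show that $t\mapsto \mathrm{Th}(tD)$ is an isometric embedding of $\mathbb{R}$ into $(\B,\rho)$. By invariance again, $\rho(\mathrm{Th}(sD),\mathrm{Th}(tD)) = \rho(0, M_{-\mathrm{Th}(sD)}(\mathrm{Th}(tD)))= \tanh^{-1}\|M_{-\mathrm{Th}(sD)}(\mathrm{Th}(tD))\|$, so everything reduces to computing this norm and checking it equals $\tanh|s-t|$.

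\textbf{Key computation.} The plan is to use the polar-decomposition identities recorded just before the statement. Write $D = J|D|$ with $|D|$ selfadjoint and $J$ a partial isometry with $J^*J$ the support projection of $|D|$; then $\mathrm{Th}(tD) = J\tanh(t|D|) = J\tanh|tD|$ (for $t\ge 0$; the odd symmetry $\mathrm{Th}(-D) = -\mathrm{Th}(D)$ handles signs). The crucial algebraic fact is that all the operators $\mathrm{Th}(tD)$, $t\in\mathbb{R}$, live in the ``one-dimensional'' functional-calculus world generated by $|D|$ (resp.\ $|D^*|$) together with the fixed partial isometry $J$, so that products like $(1 - \mathrm{Th}(sD)^*\mathrm{Th}(tD))^{-1}$, $(1-\mathrm{Th}(sD)\mathrm{Th}(sD)^*)^{\pm 1/2}$, etc., can all be written via functional calculus in $|D|$. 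Substituting $\mathrm{Th}(sD) = (\tanh s|D^*|)J$ and $\mathrm{Th}(tD) = J\tanh t|D|$ into the M\"obius formula \eqref{mobius} and simplifying, the partial isometries $J, J^*$ telescope and one is left with a scalar-type expression in $\tanh s|D|$ and $\tanh t|D|$. Concretely one expects
\[
M_{-\mathrm{Th}(sD)}(\mathrm{Th}(tD)) = J\,\tanh\big((t-s)|D|\big),
\]
using the classical Poincar\'e-disc identity $\dfrac{\tanh t - \tanh s}{1 - \tanh s\tanh t} = \tanh(t-s)$ applied through the functional calculus of $|D|$. Taking norms, $\|M_{-\mathrm{Th}(sD)}(\mathrm{Th}(tD))\| = \|\tanh((t-s)|D|)\| = \tanh(|t-s|\,\||D|\|) = \tanh|t-s|$, since $\||D|\| = \|D\| = 1$. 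Hence $\rho(\gamma_{0,D}(s),\gamma_{0,D}(t)) = \tanh^{-1}(\tanh|t-s|) = |t-s|$, as required, and the result for general $A$ follows by applying the isometry $M_A$. I would also note in passing that $\mathrm{Th}(tD)\in\B$ for all $t$ because $\|\tanh(t|D|)\| = \tanh(t) < 1$, so the curve does stay inside the open ball.

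\textbf{Main obstacle.} The genuine work is in verifying the displayed telescoping identity rigorously rather than just formally — one must be careful that $J^*J$ (the support projection of $|D|$) acts as the identity on the relevant range, so that factors such as $J^*(\tanh s|D^*|)$ collapse correctly to $(\tanh s|D|)$, and that the various resolvents $(1 + \mathrm{Th}(sD)^*\mathrm{Th}(tD))^{-1}$ genuinely commute with the functional calculus of $|D|$ as claimed. A clean way to organize this is to first observe that it is enough to verify the identity on the closure of the range of $|D|$ (on the orthogonal complement everything is zero or acts as the zero of the relevant function), and there $|D|$ is an honest selfadjoint operator with a well-defined Borel functional calculus, reducing the whole matter to the scalar addition formula for $\tanh$. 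Once this reduction is in place, the computation is routine. I would therefore structure the proof as: (1) reduce to $A=0$ by invariance; (2) reduce to $s=0$ by invariance, i.e.\ compute $\|\mathrm{Th}(tD)\|$ directly (trivial) — actually it is cleaner to keep general $s,t$ and do the telescoping once; (3) perform the functional-calculus reduction and invoke the scalar $\tanh$ addition formula; (4) take norms and conclude.
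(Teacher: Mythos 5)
Your proposal is correct and follows essentially the same route as the paper: reduce to $A=0$ by invariance, use the polar decomposition $D=J|D|$ so that the partial isometries telescope in the M\"obius formula, and apply the scalar addition formula for $\tanh$ through the functional calculus of $|D|$ to get $M_{-\mathrm{Th}(sD)}(\mathrm{Th}(tD))=\mathrm{Th}((t-s)D)$, whence the norm is $\tanh|t-s|$. Your extra care about the support projection $J^*J$ is a sensible refinement of a point the paper's computation passes over quickly.
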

\begin{proof}
It is enough to show that
$\rho(\gamma_{A,D}(s),\gamma_{A,D}(t))=|s-t|$. Since $\rho$ is
invariant with respect to $M_A$ we can assume that $A=0$. We have
$\rho(\gamma_{0,D}(s),\gamma_{0,D}(t))=\tanh^{-1}||M_B(\text{Th}(tD))||$,
where $B=-\text{Th}(sD)$. Using polar decomposition $D = J|D|$ we
have that $\text{Th}(tD) = J \tanh(t|D|)$, $\text{Th }(tD)^*\text{Th }(sD)=\tanh (t|D|)\tanh (s|D|)$, whence \marginpar{!}
\begin{eqnarray*}
M_B(\text{Th}(tD)) &=&
(1-\text{Th}(sD)\text{Th}(sD)^*)^{-1/2}(\text{Th}(tD)-\text{Th}(sD))\\
&&(1-\text{Th}(sD)^*\text{Th}(sD))^{-1}
(1-\text{Th}(sD)^*\text{Th}(sD))^{1/2}\\
&=& J(1-\tanh^2(s|D|))^{-1/2}J^*J(\tanh(t|D|) - \tanh(s|D|))\\ &&(1-\tanh(s|D|)\tanh(t|D|))^{-1}(1-\tanh^2(s|D||))^{1/2}\\
&=& J \tanh((t-s)|D|) = \text{Th}((t-s)D)
\end{eqnarray*}
giving the statement.
\end{proof}

We have to prove that $\gamma_{A,D}(t)$ is a {\it metric curve},
in the sense that the metric of its derivation equals $1$, i.e.,
$\alpha(\gamma(t),\gamma^{\prime}(t)) = 1$.

\begin{lemma} Let
 $\gamma(t) =
\text{\rm Th}(tD)$, $D\in \partial \B$. Then
\begin{equation}\label{dif eq }
\gamma^{\prime}(t) = D - \gamma(t)D^*\gamma(t).
\end{equation}
\end{lemma}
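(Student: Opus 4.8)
The plan is to differentiate the explicit formula $\gamma(t)=\mathrm{Th}(tD)=\sum_{n=0}^\infty a_{2n+1}\,(tD)^{(2n+1)}=\sum_{n=0}^\infty a_{2n+1}\,t^{2n+1}(DD^*)^nD$ term by term with respect to $t$ and then recognize the resulting series as the right-hand side of \eqref{dif eq }. Since for fixed $D\in\partial\B$ the series converges in operator norm for $|t|<$ (radius of convergence of $\tanh$, namely $\pi/2$), and we only need $t\in\mathbb R$ after noting $\|\mathrm{Th}(tD)\|=\tanh(|t|\,\||D|\|)<1$ — or rather, working inside the domain of convergence where termwise differentiation of a norm-convergent power series in the scalar $t$ is justified — we get
\[
\gamma'(t)=\sum_{n=0}^\infty (2n+1)a_{2n+1}\,t^{2n}(DD^*)^nD.
\]
The first step, then, is to justify this termwise differentiation (uniform convergence on compact subsets of the disc of convergence suffices), and to record that since $\tanh$ is analytic on all of $\mathbb R$ with $\tanh'=1-\tanh^2$, the scalar identity $\sum (2n+1)a_{2n+1}s^{2n}=1-\big(\sum a_{2n+1}s^{2n+1}\big)^2$ holds as an identity of formal power series (equivalently, of analytic functions near $0$), hence as an identity of the corresponding operator-valued functions once we substitute. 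Here I use the polar decomposition $D=J|D|$ from the excerpt: then $(tD)^{(2n+1)}=J\tanh$-type terms, more precisely $\gamma(t)=J\tanh(t|D|)$, so $\gamma'(t)=J\,|D|\,\mathrm{sech}^2(t|D|)=J|D|(1-\tanh^2(t|D|))$ via the scalar (self-adjoint functional calculus) identity for $\tanh$ on the selfadjoint operator $t|D|$.

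The second, computational step is to verify that $J|D|(1-\tanh^2(t|D|))$ equals $D-\gamma(t)D^*\gamma(t)$. Using $D=J|D|$, $\gamma(t)=J\tanh(t|D|)$, and $(J^*J)|D|=|D|$, one computes
\[
\gamma(t)D^*\gamma(t)=J\tanh(t|D|)\,|D|J^*\,J\tanh(t|D|)=J\,\tanh(t|D|)\,|D|\,\tanh(t|D|)=J\,|D|\tanh^2(t|D|),
\]
where in the last equality I used that $|D|$ and $\tanh(t|D|)$ are functions of the same selfadjoint operator and hence commute (and $\tanh(t|D|)J^*J\tanh(t|D|)=\tanh^2(t|D|)$ because $J^*J$ fixes the range of $|D|$, which contains the range of every function of $|D|$ vanishing at $0$). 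Therefore $D-\gamma(t)D^*\gamma(t)=J|D|-J|D|\tanh^2(t|D|)=J|D|(1-\tanh^2(t|D|))=\gamma'(t)$, which is exactly \eqref{dif eq }.

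The main obstacle is bookkeeping with the partial isometry $J$: one must be careful that $J^*J$ is the projection onto $\overline{\mathrm{ran}}\,|D|$ and that every operator appearing (namely $\tanh^k(t|D|)$ for $k\ge1$) has range inside that subspace, so that insertions of $J^*J$ are harmless; this is where the hypothesis that $J$ is the polar-decomposition partial isometry (not an arbitrary one) is used. A cleaner route that avoids $J$ almost entirely is to differentiate directly in the series form: from $\gamma(t)=\sum_n a_{2n+1}t^{2n+1}(DD^*)^nD$ we get $\gamma'(t)=\sum_n (2n+1)a_{2n+1}t^{2n}(DD^*)^nD$, while
\[
\gamma(t)D^*\gamma(t)=\Big(\sum_j a_{2j+1}t^{2j+1}(DD^*)^jD\Big)D^*\Big(\sum_k a_{2k+1}t^{2k+1}(DD^*)^kD\Big)=\sum_{j,k}a_{2j+1}a_{2k+1}t^{2j+2k+2}(DD^*)^{j+k+1}D,
\]
so $D-\gamma(t)D^*\gamma(t)=\sum_n t^{2n}(DD^*)^nD\big(\mathbf 1_{n=0}-\sum_{j+k+1=n}a_{2j+1}a_{2k+1}\big)$, and the scalar Cauchy-product identity $\sum_{j+k+1=n}a_{2j+1}a_{2k+1}=\mathbf 1_{n=0}-(2n+1)a_{2n+1}$ (which is just the coefficient comparison in $\tanh^2=1-\frac{d}{ds}\tanh$) finishes the proof. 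I would present this series argument as the primary one and mention the polar-decomposition shortcut as a remark. Convergence issues are mild: for real $t$ one has $\|D\|=1$ so the series for $\tanh$ converges for $|t|<\pi/2$, and then by analytic continuation along $\mathbb R$ (or directly via the selfadjoint functional calculus applied to $t|D|$, valid for all $t\in\mathbb R$ since $\tanh$ is entire-on-$\mathbb R$ analytic) the identity \eqref{dif eq } persists for every $t\in\mathbb R$.
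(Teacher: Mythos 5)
Your proposal is correct, and the ``polar-decomposition shortcut'' you relegate to a remark is precisely the paper's proof: write $\gamma(t)=J\tanh(t|D|)$, differentiate by functional calculus to get $\gamma'(t)=J|D|\cosh(t|D|)^{-2}=D\cosh(t|D|)^{-2}$, and check that $D-\gamma D^*\gamma=D-J\tanh(t|D|)\,|D|J^*J\tanh(t|D|)=D(1-\tanh^2(t|D|))$. The series/Cauchy-product argument you propose as primary is a valid alternative that trades the functional calculus for coefficient bookkeeping, but note it only applies directly for $|t|<\pi/2$ (the series \eqref{Th} for $\mathrm{Th}(tD)$ diverges in norm beyond the radius of convergence of $\tanh$, since $\|(DD^*)^nD\|=1$), so you genuinely need the real-analyticity/continuation step you mention --- or simply the functional-calculus formulation --- to cover all $t\in\mathbb{R}$, which is the range the lemma is used for. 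Given that, the polar-decomposition computation is the cleaner primary argument, and your handling of the partial isometry ($J^*J$ acting as the identity on $\overline{\mathrm{ran}}\,|D|$, hence on the range of $\tanh(t|D|)$) is exactly the point the paper's one-line computation relies on.
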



\begin{proof}
We have $\gamma(t)=J\tanh(t|D|)$ and
$$\gamma'(t)=J|D|\cosh(t|D|)^{-2}= D(\cosh(t|D|)^{-2}.$$ On the
other hand
$$D-\gamma(t)D^*\gamma(t)=D-J\tanh(t|D|)|D|J^*J\tanh(t|D|)=D-D\tanh^2(t|D|)=D(\cosh(t|D|))^{-2}$$ giving (\ref{dif eq }).
\end{proof}

\begin{lemma}\label{met} Let $\gamma(t)=\text{\rm Th}(tD)$, $D\in\partial\B$. Then
\begin{equation}\label{metrcurve}
(1-\gamma\gamma^*)^{-1/2}(D-\gamma
D^*\gamma)(1-\gamma^*\gamma)^{-1/2} = D.
\end{equation}
\end{lemma}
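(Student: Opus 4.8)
The plan is to reduce everything to the functional calculus of the single positive operator $|D| = (D^*D)^{1/2}$ via the polar decomposition $D = J|D|$, exactly as in the computations immediately preceding the statement. First I would record three elementary identities. Using $\gamma(t) = J\tanh(t|D|)$ together with the defining property $J^*J|D| = |D|$ of the partial isometry (so that $J^*J$ acts as the identity on the closed range of any function of $|D|$ vanishing at $0$), one gets $\gamma^*\gamma = \tanh^2(t|D|)$ and hence $(1-\gamma^*\gamma)^{-1/2} = \cosh(t|D|)$, since $1-\tanh^2 = \cosh^{-2}$. Symmetrically, writing $|D^*| = (DD^*)^{1/2} = J|D|J^*$, one obtains $\gamma\gamma^* = \tanh^2(t|D^*|)$ and $(1-\gamma\gamma^*)^{-1/2} = \cosh(t|D^*|)$. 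Finally, $\gamma D^*\gamma = J\tanh(t|D|)\,|D|\,J^*J\,\tanh(t|D|) = J|D|\tanh^2(t|D|) = D\tanh^2(t|D|)$, so $D - \gamma D^*\gamma = D\cosh^{-2}(t|D|)$ — which is also exactly what the preceding lemma gives.

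Substituting these three identities, the left-hand side of \eqref{metrcurve} becomes
$$\cosh(t|D^*|)\cdot D\cosh^{-2}(t|D|)\cdot\cosh(t|D|) = \cosh(t|D^*|)\,D\,\cosh^{-1}(t|D|).$$
It then remains only to intertwine the two $\cosh$ factors, i.e.\ to verify $\cosh(t|D^*|)\,D = D\,\cosh(t|D|)$. This follows from the general fact that $g(|D^*|)D = D\,g(|D|)$ for any Borel function $g$, which one reads off from $D = J|D|$ by expanding $g(tJ|D|J^*)$ in power series and using $(J|D|J^*)^{n} = J|D|^{n}J^*$ for $n\ge 1$ together with $J^*J|D| = |D|$. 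Once this is in hand, $\cosh(t|D^*|)\,D\,\cosh^{-1}(t|D|) = D\,\cosh(t|D|)\,\cosh^{-1}(t|D|) = D$, which is the claim.

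The only genuine subtlety — the point I expect to be the main obstacle — is bookkeeping with the partial isometry $J$: one has to be careful that $J^*J$ (resp.\ $JJ^*$) equals the identity precisely on the closed range of $|D|$ (resp.\ of $|D^*|$), and that every function of $|D|$ appearing in the computation has closed range inside that of $|D|$ because it vanishes at the origin. The one place where a constant term intervenes is in $\cosh(tJ|D|J^*) = 1 - JJ^* + J\cosh(t|D|)J^*$, and this must be handled explicitly when proving the intertwining identity; everything else is a routine functional-calculus manipulation.
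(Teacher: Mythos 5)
Your proof is correct and follows essentially the same route as the paper's: polar decomposition $D=J|D|$, reduction to functional calculus of $|D|$, and the intertwining identity $g(|D^*|)D=Dg(|D|)$ (which the paper derives from $D|D|=|D^*|D$). The only cosmetic difference is that you compute $D-\gamma D^*\gamma=D\cosh^{-2}(t|D|)$ and $(1-\gamma^{*}\gamma)^{-1/2}=\cosh(t|D|)$ explicitly, whereas the paper reaches the same cancellation abstractly via the observation that $D^*\gamma=\gamma^*D$ (since $D^*\mathrm{Th}(tD)=|D|\tanh(t|D|)$ is selfadjoint), giving $D-\gamma D^*\gamma=(1-\gamma\gamma^*)D$.
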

\begin{proof}
Setting $D = J|D|$, we have
$$\gamma^*\gamma = \text{Th}(tD)^*\text{Th}(tD) = \tanh(t|D|)J^*J \tanh(t|D|)
= \tanh^2(t|D|).$$ Furthermore
$$\gamma\gamma^* = \tanh(t|D^*|)JJ^*\tanh(t|D^*|) = \tanh^2(t|D^*|).$$
Since
$$D|D| = |D^*|D,$$
we have that
$$D f(|D|) = f(|D^*|)D$$
for any bounded Borel function $f$. Taking $f(x) = 1-\tanh^2(tx)$,
we get
$$D(1-\gamma^*\gamma) = (1-\gamma\gamma^*)D.$$

Next
$$D^*\gamma = \gamma^*D$$
because $D^*\text{Th}(tD) = D^*J \tanh(t|D|) = |D| \tanh(t|D|)$ is
selfadjoint.

Hence
\begin{eqnarray*}
(1-\gamma\gamma^*)^{-1/2}(D-\gamma
D^*\gamma)(1-\gamma^*\gamma)^{-1/2} =
(1-\gamma\gamma^*)^{-1/2}(1-\gamma\gamma^*)D(1-\gamma^*\gamma)^{-1/2} \\
=(1-\gamma^*\gamma)^{1/2}(1-\gamma^*\gamma)^{-1/2}D = D.
\end{eqnarray*}
\end{proof}

\begin{proposition}\label{C:metrcurve} Let $\gamma(t)=\gamma_{A,D}(t)$, $A\in\B$, $D\in\partial \B$. Then
$$\alpha(\gamma(t),\gamma^{\prime}(t)) = 1.$$
\end{proposition}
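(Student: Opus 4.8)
The plan is to reduce the general case $\gamma=\gamma_{A,D}$ to the case $A=0$ by exploiting invariance, and then to combine Lemma~\ref{alpha} with Lemma~\ref{met}. First I would recall that $\gamma_{A,D}(t)=M_A(\gamma_{0,D}(t))$ where $\gamma_{0,D}(t)=\text{Th}(tD)$, so $\gamma_{A,D}'(t)=\mathcal D M_A(\gamma_{0,D}(t))\,\gamma_{0,D}'(t)$ by the chain rule. Since $M_A\in Aut(\B)$, the differential Carath\'eodory metric $\alpha$ is invariant under $M_A$ (this is \cite[Proposition~V.1.2]{Vesent}, already invoked in the proof of Lemma~\ref{alpha}): $\alpha(M_A(X),\mathcal D M_A(X)V)=\alpha(X,V)$ for all $X\in\B$, $V\in L(K,H)$. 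Applying this with $X=\gamma_{0,D}(t)$ and $V=\gamma_{0,D}'(t)$ gives
\[
\alpha(\gamma_{A,D}(t),\gamma_{A,D}'(t))=\alpha(\gamma_{0,D}(t),\gamma_{0,D}'(t)),
\]
so it suffices to treat $A=0$.

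For $A=0$, write $\gamma=\gamma_{0,D}(t)=\text{Th}(tD)$ and use Lemma~\ref{alpha}:
\[
\alpha(\gamma,\gamma')=\big\|(1-\gamma\gamma^*)^{-1/2}\,\gamma'\,(1-\gamma^*\gamma)^{-1/2}\big\|.
\]
By the differential equation \eqref{dif eq } we have $\gamma'=D-\gamma D^*\gamma$, and then Lemma~\ref{met} identifies exactly this conjugated expression:
\[
(1-\gamma\gamma^*)^{-1/2}(D-\gamma D^*\gamma)(1-\gamma^*\gamma)^{-1/2}=D.
\]
Hence $\alpha(\gamma,\gamma')=\|D\|=1$ since $D\in\partial\B$. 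Combining with the reduction above finishes the proof.

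The argument is essentially a bookkeeping exercise once the right lemmas are in place, so there is no serious obstacle; the only point requiring care is to make sure the chain rule for the Carath\'eodory metric is applied in the correct direction — that is, that the invariance $\alpha(M_A(X),\mathcal D M_A(X)V)=\alpha(X,V)$ is being used, rather than an inequality. Since $M_A$ is biholomorphic (its inverse is $M_{-A}$, as recorded in Section~\ref{S:distance}), this is a genuine equality, so the reduction to $A=0$ is rigorous. Everything else is a direct substitution into Lemmas~\ref{alpha} and~\ref{met}.
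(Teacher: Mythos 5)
Your proof is correct and follows essentially the same route as the paper: reduce to $A=0$ via the invariance $\alpha(F(X),\mathcal D F(X)V)=\alpha(X,V)$ for $F\in Aut(\B)$, then combine Lemma~\ref{alpha}, the differential equation $\gamma'=D-\gamma D^*\gamma$, and Lemma~\ref{met} to get $\alpha(\gamma,\gamma')=\|D\|=1$. No issues.
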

\begin{proof}
It suffices to prove this for $A
= 0$, since $\alpha(F(X),\D F(X)V)=\alpha(X,V)$ for any $F\in
Aut(\B)$, $X\in\B$, $V\in L(H,K)$ (see
\cite[Proposition~V.1.2]{Vesent}), and hence
\begin{eqnarray*}
\alpha(M_A(\text{Th}(tD)),
(M_A(\text{Th}(tD)))')&=&\alpha(M_A(\text{Th}(tD)),\D
M_A(\text{Th}(tD))(\text{Th}(tD))')\\
&=&\alpha(\text{Th}(tD),(\text{Th}(tD))').
\end{eqnarray*}
Assume therefore $\gamma(t)=\text{Th}(tD)$.
By Lemma~\ref{alpha} and \ref{met} we have
$$\alpha(\gamma(t),\gamma^{\prime}(t)) =
\|(1-\gamma\gamma^*)^{-1/2}(D-\gamma
D^*\gamma)(1-\gamma^*\gamma)^{-1/2}\| = \|D\| = 1.$$
\end{proof}

The next step is to prove that the family $\M$ of all lines is
invariant with respect to the biholomorphic maps of $\B$.
\begin{lemma}\label{invbihol}
Let $\eta(t) = M_A(\gamma(t))$ where $\gamma(t) = \text{\rm
Th}(tD)$. Then, for each biholomorphic map $h:\B\to \B$, the curve
$h(\eta(t))$ belongs to the family $\M$.
\end{lemma}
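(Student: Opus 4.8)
The plan is to reduce the claim to a statement about a single linear isometry acting on a standard line through the origin, and then to compute.

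First I would absorb the inner M\"obius map. Write $\eta(t)=M_A(\gamma(t))$ with $\gamma(t)=\text{Th}(tD)$, $\|D\|=1$, and let $h$ be a biholomorphic automorphism of $\B$ (this is what the statement ``biholomorphic map $h\colon\B\to\B$'' means here). Put $g=h\circ M_A\in Aut(\B)$ and $B=g(0)=h(A)$; then $\ell:=M_{-B}\circ g$ is a biholomorphic automorphism of $\B$ fixing $0$, and $g=M_B\circ\ell$, so that $h(\eta(t))=M_B\big(\ell(\text{Th}(tD))\big)$. By Harris's theorem $\cite{Har}$ (cf. the proof of Lemma~\ref{WOT}), $\ell$ is the restriction to $\B$ of a surjective linear isometry of $L(K,H)$, which I keep denoting $\ell$. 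Hence it suffices to prove $\ell(\text{Th}(tD))=\text{Th}(t\ell(D))$ for all $t\in\mathbb R$: since $\|\ell(D)\|=\|D\|=1$, this yields $h(\eta)=\gamma_{B,\ell(D)}\in\M$.

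Second I would use that a surjective linear isometry of $L(K,H)$ preserves the Jordan triple product $\{X,Y,Z\}=\tfrac12(XY^*Z+ZY^*X)$, i.e. $\ell(\{X,Y,Z\})=\{\ell X,\ell Y,\ell Z\}$; this is part of Harris's description of the isometries of bounded symmetric domains (equivalently, the Banach--Stone theorem for $JB^*$-triples). The key point is the recursion $D^{(2n+1)}=(DD^*)^nD=\{D^{(2n-1)},D,D\}$, which is immediate from $\tfrac12\big((DD^*)^{n-1}DD^*D+DD^*(DD^*)^{n-1}D\big)=(DD^*)^nD$. Setting $D'=\ell(D)$, an induction on $n$ then gives $\ell(D^{(2n+1)})=D'^{(2n+1)}$ for every $n\ge0$: the case $n=0$ is trivial, and $\ell(D^{(2n+1)})=\ell(\{D^{(2n-1)},D,D\})=\{\ell(D^{(2n-1)}),D',D'\}=\{D'^{(2n-1)},D',D'\}=D'^{(2n+1)}$.

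Third I would pass from the odd powers back to $\text{Th}$. For $|t|<\pi/2$ the series $\text{Th}(tD)=\sum_{n\ge0}a_{2n+1}t^{2n+1}D^{(2n+1)}$ converges in norm (since $\|D^{(2n+1)}\|=1$ and $\sum_n a_{2n+1}s^{2n+1}$ has radius of convergence $\pi/2$), so linearity and continuity of $\ell$ give $\ell(\text{Th}(tD))=\sum_{n\ge0}a_{2n+1}t^{2n+1}D'^{(2n+1)}=\text{Th}(tD')$ for $|t|<\pi/2$. Both sides are real-analytic in $t\in\mathbb R$: via the polar decompositions, $\text{Th}(tD)=J\tanh(t|D|)$ and $\text{Th}(tD')=J'\tanh(t|D'|)$ are analytic in $t$ because $\tanh$ is holomorphic in a complex neighbourhood of $\mathbb R$ and $|D|$, $|D'|$ are self-adjoint (and finite-dimensional), while $\ell$ is bounded and linear. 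Hence $\ell(\text{Th}(tD))=\text{Th}(tD')$ for all $t\in\mathbb R$, and the proof finishes as indicated in the first paragraph.

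The only genuinely non-routine ingredient is the fact that surjective linear isometries of $L(K,H)$ respect the triple product; that is where I would lean on Harris's work (alternatively on the explicit form $X\mapsto UXV$, together with $X\mapsto UX^{\top}V$ when $\dim K=\dim H$, followed by the same computation). The reduction via M\"obius maps, the recursion for $D^{(2n+1)}$, and the analytic continuation in $t$ are all routine.
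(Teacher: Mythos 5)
Your proof is correct and takes essentially the same route as the paper: both rest on Harris's factorization of an automorphism of $\B$ as a M\"obius transform composed with a linear isometry preserving the triple product, whence the isometry commutes with $\mathrm{Th}$ and the image curve is again some $\gamma_{B,D'}\in\M$. You merely streamline the argument by applying Harris directly to $h\circ M_A$ rather than splitting into the cases $h=L$ and $h=M_B$, and you supply the verification of $\ell(\mathrm{Th}(tD))=\mathrm{Th}(t\ell(D))$ (recursion for $D^{(2n+1)}$ plus analytic continuation in $t$) that the paper uses without comment.
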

\begin{proof}
By  \cite[Theorems~3 and 4]{harris}, there is a linear isometry
$L$ of the space $L(K,H)$ to itself satisfying the condition
\begin{equation}\label{multipl}
L(AB^*A) = L(A)L(B)^*L(A)\text{  for all  } A,B\in L(K,H)
\end{equation} and
such that $$h = M_{h(0)}\circ L = L\circ M_{-h(0)}.$$ It follows
from (\ref{multipl})  (see  a remark after \cite[Corollary
~5]{harris}) that $$L\circ M_A = M_{L(A)}\circ L$$ for all $A\in\B$.

So it suffices to consider the cases $h=L$ and $h = M_B$.
 Let us firstly prove that $L(\eta(t))\in \M$. Indeed,
\[\begin{split}L(\eta(t)) &= L(M_A(\gamma(t)) = M_{L(A)}(L(\gamma(t))
\\&=M_{L(A)}(L(\text{Th}(tD))) = M_{L(A)}(\text{Th}(tL(D)))\in
\M.\end{split}\] Now we have to prove that $M_B(\eta(t))\in \M$.
Applying \cite[Theorems~3 and 4]{harris} to  $h(x)= M_B(M_A(x))$
we get a linear isometry $L$ satisfying (\ref{multipl}) and such that
$$M_B\circ M_A = M_C\circ L$$
where $C = h(0)=M_B(A)$.
 Thus $$M_B(\eta(t)) = M_B(M_A(\gamma(t)) = M_C(L(\gamma(t)))=M_C(\text{Th}(tL(D))) \in \M.$$
\end{proof}

\bigskip

Our next goal is to show that for each $A,B\in\B$ there is a unique
line in $\M$ which passes through $A$, $B$.
\begin{lemma}\label{viaA}
The set of all lines in $\M$ that go through $A$ is
$\{\gamma_{A,D}: D\in
\partial(\B)\}$.
\end{lemma}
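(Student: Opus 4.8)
The plan is to prove the two inclusions separately. The easy one: each $\gamma_{A,D}$ with $D\in\partial\B$ is by definition a line in $\M$, and, by Proposition~\ref{metricline}, it is a metric line; it passes through $A$ because $\gamma_{A,D}(0)=M_A(\text{Th}(0))=M_A(0)=A$. For the reverse inclusion I would first reduce to the case $A=0$: if $\ell\in\M$ contains $A$, then by Lemma~\ref{invbihol} the M\"obius transform $M_{-A}$ (a $\rho$-isometry) sends the family $\M$ into itself, so $M_{-A}(\ell)$ is again a line in $\M$ and it contains $M_{-A}(A)=0$. If one knows that every line in $\M$ passing through $0$ is of the form $\gamma_{0,D}=\{\text{Th}(tD):t\in\mathbb{R}\}$ with $D\in\partial\B$, then applying $M_A$ back, and using $M_A(\text{Th}(tD))=\gamma_{A,D}(t)$, yields $\ell=\gamma_{A,D}$.

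It remains to analyze the lines through $0$. Let $\gamma_{E,D}\in\M$ contain $0$, say $\gamma_{E,D}(s)=M_E(\text{Th}(sD))=0$ for some $s\in\mathbb{R}$; since the outer factors in the definition of $M_E$ are invertible, this forces $\text{Th}(sD)=-E$. Now I would simply reuse the computation already carried out in the proof of Proposition~\ref{metricline}: there, for $B=-\text{Th}(sD)$ one obtains $M_B(\text{Th}(tD))=\text{Th}((t-s)D)$ for every $t\in\mathbb{R}$. Applying this with $B=E=-\text{Th}(sD)$ gives $\gamma_{E,D}(t)=\text{Th}((t-s)D)$, so, as $t$ runs over $\mathbb{R}$, the set $\gamma_{E,D}$ equals $\{\text{Th}(rD):r\in\mathbb{R}\}=\gamma_{0,D}$. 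This finishes the argument.

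There is no serious obstacle here; the only thing needing care is the distinction between a line as a \emph{subset} of $\B$ and a chosen parametrization of it. Lemma~\ref{invbihol} and Proposition~\ref{metricline} concern the ranges of the curves $\gamma_{E,D}(\cdot)$, so I would argue consistently at the level of ranges and only match up parametrizations at the very end, where the reparametrization $t\mapsto t-s$ suffices. (Since $\text{Th}$ is odd, $-\text{Th}(sD)=\text{Th}(-sD)$ indeed lies on $\gamma_{0,D}$, which is why the identification $E=-\text{Th}(sD)$ is natural; but the computation in Proposition~\ref{metricline} is phrased precisely for $B=-\text{Th}(sD)$, so nothing more is needed.)
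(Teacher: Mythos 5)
Your proof is correct and follows essentially the same route as the paper's: reduce to $A=0$ (the paper states this without elaboration, you justify it via Lemma~\ref{invbihol} and $M_A^{-1}=M_{-A}$), observe that $M_E(\mathrm{Th}(sD))=0$ forces $E=-\mathrm{Th}(sD)$, and then reuse the computation $M_{-\mathrm{Th}(sD)}(\mathrm{Th}(tD))=\mathrm{Th}((t-s)D)$ from the proof of Proposition~\ref{metricline} to conclude the line coincides with $\gamma_{0,D}$ as a set. Your extra care about ranges versus parametrizations and the explicit easy inclusion are fine additions but do not change the argument.
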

\begin{proof}
It suffices to assume that $A = 0$.
Suppose that a line $\gamma(t)
= M_B({\rm Th}(tD))$ goes through $0$, i.e., $\gamma(s) = 0$ for
some $s\in{\mathbb R}$. Then clearly $B = - \text{Th}(sD)$. Using
the arguments from the proof of Proposition~\ref{metricline} we
obtain $\gamma(t)=
 \text{Th}((t-s)D).$
Thus $\gamma=\gamma_{0,D}$.
\end{proof}

\begin{corollary}\label{unique}
For each $A,B\in \B$, there is a unique  line in $\M$ that passes
through them.
\end{corollary}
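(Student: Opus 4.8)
The plan is to combine existence (Proposition~\ref{metricline}, which gives that every $\gamma_{A,D}$ is a metric line, together with Lemma~\ref{viaA}) with a uniqueness argument that reduces, via invariance, to the case $A=0$. For existence: given $A,B\in\B$, apply the M\"obius transform $M_{-A}$, which is a $\rho$-isometry, to move $A$ to $0$; then $B$ goes to $M_{-A}(B)=:C\in\B$. Choose $D\in\partial\B$ and $t_0\in\mathbb R$ with $\mathrm{Th}(t_0 D)=C$ — this is possible because $\mathrm{Th}(tD)=J\tanh(t|D|)$ for a polar decomposition $C=J|C|$ (with $|D|$ chosen so that $\tanh(t_0|D|)=|C|$ and $J$ the partial isometry of $C$), so $\|\mathrm{Th}(t_0D)\|=\tanh(t_0\||D|\|)$ can be made equal to $\|C\|<1$. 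Then $\gamma_{0,D}$ passes through both $0$ and $C$, and hence $M_A(\gamma_{0,D}(\cdot))=\gamma_{A,L(D)}(\cdot)$ (using Lemma~\ref{invbihol}, or directly that $M_A\circ\gamma_{0,D}$ is of the form $\gamma_{A,D'}$) passes through $A$ and $B$.

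For uniqueness: suppose $\ell_1,\ell_2\in\M$ both pass through $A$ and $B$. Apply $M_{-A}$; by invariance of $\M$ under biholomorphic maps (Lemma~\ref{invbihol}) the images $M_{-A}(\ell_1), M_{-A}(\ell_2)$ are again in $\M$ and both pass through $0$ and through $C=M_{-A}(B)$. By Lemma~\ref{viaA}, each image is of the form $\gamma_{0,D_i}$ for some $D_i\in\partial\B$, and the parametrization may be chosen so that $\gamma_{0,D_i}(0)=0$. The condition $\gamma_{0,D_i}(t_i)=C$ for some $t_i$ forces, after replacing $D_i$ by a scalar multiple, $t_i=1$ and $\mathrm{Th}(D_i)=C$; writing $C=J|C|$ in polar form this means $J\tanh|D_i|=J|C|$, and since $J$ determines and is determined by the partial-isometry part, we get $\tanh|D_i|=|C|$ on the support of $C$, i.e.\ $|D_i|$ is pinned down there, and on the complementary subspace $D_i$ is a norm-one partial isometry whose precise choice does not affect the set $\gamma_{0,D_i}(\mathbb R)=\{\,\mathrm{Th}(tD_i):t\in\mathbb R\,\}$. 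Hence $\gamma_{0,D_1}$ and $\gamma_{0,D_2}$ are the same subset of $\B$, so $\ell_1=\ell_2$.

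The step I expect to be the main obstacle is the rigidity claim inside the uniqueness argument: showing that two curves $\mathrm{Th}(tD_1)$ and $\mathrm{Th}(tD_2)$ that agree at $t=0$ and at one other point must coincide as sets. The subtlety is that $D\in\partial\B$ is not determined by the curve it generates — only its polar-decomposition data up to the indicated freedom is — so one must argue that this freedom does not enlarge the trace of the curve. The clean way is to parametrize by $C=\mathrm{Th}(tD)$ and observe that $|C|$ and the partial isometry $J$ vary through $C$ alone: $|C|$ ranges over $\tanh$ of the cone generated by $|D|$ and $J$ is fixed, so the whole line is recovered from any single nonzero point on it plus the basepoint $0$. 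Once this is in place the corollary follows immediately from Proposition~\ref{metricline}, Lemma~\ref{viaA} and Lemma~\ref{invbihol}, and conditions (1) of the definition of hyperbolic space is verified for $(\B,\rho)$.
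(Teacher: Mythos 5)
Your argument follows the same route as the paper's proof: reduce to $A=0$ by a M\"obius transform, construct $D$ from the polar decomposition of $C=M_{-A}(B)$ for existence, and combine Lemma~\ref{viaA} with uniqueness of the polar decomposition for uniqueness. However, one assertion in your uniqueness step is false as written: you claim that on the complement of the support of $C$ the operator $D_i$ may be ``a norm-one partial isometry whose precise choice does not affect the set $\gamma_{0,D_i}(\mathbb R)$.'' Changing $D$ off the support of $C$ does change the curve $\{\mathrm{Th}(tD):t\in\mathbb R\}$ --- for $2\times 2$ matrices, $D_1=\mathrm{diag}(1,0)$ and $D_2=\mathrm{diag}(1,1)$ give the distinct lines $\{\mathrm{diag}(\tanh t,0)\}$ and $\{\mathrm{diag}(\tanh t,\tanh t)\}$ through $0$ --- so if that freedom existed it would destroy uniqueness. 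What actually saves the argument is that the freedom never arises: if $\mathrm{Th}(t_iD_i)=J_i\tanh(t_i|D_i|)=C$ with $t_i\neq 0$, then $\ker D_i=\ker\tanh(t_i|D_i|)=\ker C$, so $D_i$ vanishes off the support of $C$; moreover $J_i\tanh(t_i|D_i|)$ is precisely the polar decomposition of $C$, whence $J_i=J_C$ and $t_i|D_i|=\tanh^{-1}|C|$, and the normalization $\|D_i\|=1$ then pins down $t_i$ and $D_i$ completely. This is exactly the ``clean way'' you describe in your final paragraph (the line is recovered from the basepoint $0$ and any single nonzero point on it), and it is the paper's argument; the intermediate claim about residual freedom should simply be deleted.

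Two minor points: you cannot ``replace $D_i$ by a scalar multiple'' to force $t_i=1$, since $D_i$ must remain in $\partial\B$ --- the scaling has to be absorbed into $t_i$, which is how one gets $t|D_1|=s|D_2|$ and hence $t=s$, $D_1=D_2$. Also, $M_A\circ\gamma_{0,D}=\gamma_{A,D}$ holds by the very definition of $\gamma_{A,D}$, so no appeal to Lemma~\ref{invbihol} (and no isometry $L$) is needed in the existence step.
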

\begin{proof}
We may assume that $A = 0$. Let $B=J|B|$ be the polar
decomposition of $B$ and let $C=\tanh^{-1}|B|/t_0$ for $t_0>0$ be
such that $||C||=1$.
 Then for $D=JC$ the line $\gamma_{0,D}$ passes through $0$ and $B$.

If there are two lines, $\gamma_{0,D_1}$ and $\gamma_{0,D_2}$, going through
$B$ then by the above lemma, $B = \text{Th}(tD_1)
=\text{Th}(sD_2)$ for some $t$, $s\in{\mathbb R}$. We may suppose that $t$, $s > 0$. Taking polar
decompositions of $D_1=J_1|D_1|$ and $D_2=J_2|D_2|$ we see that
$J_1=J_2$ and $\tanh(t|D_1|)=\tanh(s|D_2|)$,  which imply that
$t|D_1|=s|D_2|$. But this clearly shows that the lines coincide.
\end{proof}

\begin{lemma}\label{inequality}
\begin{equation}\label{ineq}
\|A\|\le \|(1-BB^*)^{-1/2}(A - BA^*B)(1-B^*B)^{-1/2}\|.
\end{equation}
 for each $A$, $B\in\B$.
\end{lemma}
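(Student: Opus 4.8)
The plan is to rewrite \eqref{ineq} as an inequality between self-adjoint operators, where the map $X\mapsto X-CXC$ is linear, and then to prove that inequality by a short iteration argument.

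\emph{Reduction.} Pass to $2\times2$ operator matrices on $H\oplus K$: put $\widehat A=\left(\begin{smallmatrix}0&A\\ A^*&0\end{smallmatrix}\right)$ and $\widehat B=\left(\begin{smallmatrix}0&B\\ B^*&0\end{smallmatrix}\right)$. These are self-adjoint, $\|\widehat A\|=\|A\|$, $\|\widehat B\|=\|B\|<1$, and $\widehat B^2=\left(\begin{smallmatrix}BB^*&0\\ 0&B^*B\end{smallmatrix}\right)$, so $(1-\widehat B^2)^{\pm1/2}$ is bounded and block-diagonal with entries $(1-BB^*)^{\pm1/2}$ and $(1-B^*B)^{\pm1/2}$. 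Let $R:=(1-BB^*)^{-1/2}(A-BA^*B)(1-B^*B)^{-1/2}$ be the operator inside the norm on the right of \eqref{ineq}. A direct computation (using $(BA^*B)^*=B^*AB^*$) gives $(1-\widehat B^2)^{-1/2}(\widehat A-\widehat B\widehat A\widehat B)(1-\widehat B^2)^{-1/2}=\left(\begin{smallmatrix}0&R\\ R^*&0\end{smallmatrix}\right)$, an operator of norm $\|R\|$. Now set $\widehat X:=(1-\widehat B^2)^{-1/2}\widehat A(1-\widehat B^2)^{-1/2}$, which is self-adjoint. Since $\widehat B$ commutes with $(1-\widehat B^2)^{\pm1/2}$, we have $\widehat A=(1-\widehat B^2)^{1/2}\widehat X(1-\widehat B^2)^{1/2}$ and $(1-\widehat B^2)^{-1/2}(\widehat A-\widehat B\widehat A\widehat B)(1-\widehat B^2)^{-1/2}=\widehat X-\widehat B\widehat X\widehat B$. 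Thus $\|A\|=\|(1-\widehat B^2)^{1/2}\widehat X(1-\widehat B^2)^{1/2}\|$ and $\|R\|=\|\widehat X-\widehat B\widehat X\widehat B\|$, so \eqref{ineq} is a special case of the statement: for all bounded self-adjoint operators $X,C$ on a Hilbert space with $\|C\|<1$,
\[
\bigl\|(1-C^2)^{1/2}X(1-C^2)^{1/2}\bigr\|\le\|X-CXC\|.
\]

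\emph{Proof of the reduced inequality.} Let $\mu=\|X-CXC\|$, so $-\mu I\le X-CXC\le\mu I$; replacing $X$ by $-X$, it is enough to show that $X-CXC\le\mu I$ implies $(1-C^2)^{1/2}X(1-C^2)^{1/2}\le\mu I$. The hypothesis says that for every vector $\psi$,
\[
\langle X\psi,\psi\rangle\le\mu\|\psi\|^2+\langle XC\psi,C\psi\rangle .
\]
Applying this with $\psi$ replaced by $C^k\psi$ and adding over $k=0,1,\dots,n-1$, the right-hand terms telescope, giving $\langle X\psi,\psi\rangle\le\mu\sum_{k=0}^{n-1}\|C^k\psi\|^2+\langle XC^n\psi,C^n\psi\rangle$. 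Since $\|C\|<1$ the last term tends to $0$ as $n\to\infty$ and the series converges, so
\[
\langle X\psi,\psi\rangle\le\mu\sum_{k=0}^{\infty}\|C^k\psi\|^2=\mu\,\bigl\langle(1-C^2)^{-1}\psi,\psi\bigr\rangle .
\]
Finally, put $\psi=(1-C^2)^{1/2}\phi$: the left-hand side becomes $\langle(1-C^2)^{1/2}X(1-C^2)^{1/2}\phi,\phi\rangle$ and the right-hand side becomes $\mu\|\phi\|^2$, which is precisely the required operator inequality.

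The reduction is routine; the one idea there is the passage to self-adjoint $2\times2$ matrices, which linearizes $X\mapsto X-CXC$ and makes the quadratic-form argument available. The heart of the matter is the iteration step: iterating the hypothesis produces the Neumann series $\sum_{k\ge0}C^{2k}=(1-C^2)^{-1}$, which is exactly what cancels the conjugating factors $(1-C^2)^{1/2}$ — and that cancellation is the source of \eqref{ineq}.
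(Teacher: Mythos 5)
Your proof is correct, and it takes a genuinely different route from the paper's. The paper works with the polar decomposition $B=J|B|$, sets $|B^*|=\tanh P$ and $|B|=\tanh Q$, rewrites the right-hand side of \eqref{ineq} as $\|(\cosh P)A(\cosh Q)-(\sinh P)JAJ^*(\sinh Q)\|$, and bounds this below by $\|A\|$ using almost-norming unit vectors together with the identity $\cosh a\cosh b-\sinh a\sinh b=\cosh(a-b)\ge 1$. You instead linearize: the self-adjoint dilations $\widehat A=\bigl(\begin{smallmatrix}0&A\\A^*&0\end{smallmatrix}\bigr)$, $\widehat B=\bigl(\begin{smallmatrix}0&B\\B^*&0\end{smallmatrix}\bigr)$ turn the map $A\mapsto A-BA^*B$ into the linear Stein-type map $\widehat A\mapsto\widehat A-\widehat B\widehat A\widehat B$ (indeed $\widehat B\widehat A\widehat B$ has off-diagonal entries $BA^*B$ and its adjoint, so the norms match), and the lemma becomes the clean general inequality $\|(1-C^2)^{1/2}X(1-C^2)^{1/2}\|\le\|X-CXC\|$ for self-adjoint $X,C$ with $\|C\|<1$, which you prove by telescoping the quadratic-form inequality along powers of $C$ and summing the Neumann series $\sum_{k\ge0}C^{2k}=(1-C^2)^{-1}$. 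All steps check out; in particular $(1-C^2)^{1/2}$ is invertible since $\|C\|<1$, so the final substitution $\psi=(1-C^2)^{1/2}\phi$ loses nothing, and the reduction to one-sided operator inequalities via $X\mapsto -X$ is legitimate because both sides are self-adjoint. Your argument buys robustness and reusability: it avoids the choice of almost-norming vectors (the one step in the paper's proof that requires some care to justify rigorously) and isolates a general operator inequality of independent interest; the paper's argument, once the hyperbolic-trigonometric bookkeeping is accepted, is shorter and keeps visible the geometric source of the estimate, namely $\cosh(a-b)\ge 1$.
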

\begin{proof}
Consider the polar decomposition $B=J|B|$. Then
$|B^*|=(BB^*)^{1/2}=J|B|J^*$. Let $P=\tanh^{-1}(|B^*|)$, and
$Q=\tanh^{-1}(|B|)$. Then
$$(1-BB^*)^{-1/2}(A - BA^*B)(1-B^*B)^{-1/2}=(\cosh P)A(\cosh Q)-
(\sinh P)JAJ^*(\sinh Q).$$


For any $\varepsilon>0$, there are unit vectors $x,y$ such that
$$((\cosh P)A(\cosh Q)x,y)\ge \|(\cosh P)y\|\|A\|\|(\cosh Q)x\| -\varepsilon.$$
Since $\|(\cosh P)y\|^2-\|(\sinh P)y\|^2 = \|y\|^2$, and
 $\|(\cosh Q)x\|^2-\|(\sinh Q)x\|^2 = \|x\|^2$
one can find numbers $a,b$ such that

$\|(\sinh P)y\|= \sinh b$, $\|(\cosh P)y\|= \cosh b$, $\|(\sinh
Q)x\|= \sinh a$, $\|(\cosh Q)x\|= \cosh a$.

Hence
\begin{eqnarray*}
&&\|(\cosh P)A(\cosh Q)-
(\sinh P)JAJ^*(\sinh Q)\| \\
&&\ge (((\cosh P)A(\cosh Q)-
(\sinh P)JAJ^*(\sinh Q))x,y) \\
&&\ge (\cosh b)(\cosh a)\|A\| - \varepsilon - (\sinh b)(\sinh a)\|A\|\\
&&\ge \cosh (b-a)\|A\| - \varepsilon\ge \|A\| - \varepsilon,
\end{eqnarray*}
giving the statement.
\end{proof}

\begin{lemma}\label{lem-hyperb}
Let us consider two lines: $\gamma(t) = M_A(\text{\rm Th}(tC))$,
$\eta(t) = M_A(\text{\rm Th}(tD))$. Then
\begin{equation}\label{hyp}
2\rho(\gamma(s),\eta(s))\le \rho(\gamma(2s),\eta(2s))
\end{equation}
for each $s>0$.
\end{lemma}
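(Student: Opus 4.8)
The plan is to reduce to the case $A=0$ and then to a single operator-norm inequality. Since $\rho$ is invariant under the biholomorphic automorphism $M_{-A}$, and $M_{-A}$ carries $\gamma,\eta$ to the lines $t\mapsto\text{Th}(tC)$ and $t\mapsto\text{Th}(tD)$, it suffices to prove $2\rho(\text{Th}(sC),\text{Th}(sD))\le\rho(\text{Th}(2sC),\text{Th}(2sD))$ for $C,D\in\partial\B$ and $s>0$. Writing $v=\|M_{-\text{Th}(sC)}(\text{Th}(sD))\|$, so that $\rho(\text{Th}(sC),\text{Th}(sD))=\tanh^{-1}v$ by \eqref{dist0}, and using that $\tanh^{-1}$ is increasing while $\tanh 2x=2\tanh x/(1+\tanh^2 x)$, the assertion is equivalent to
\[
\bigl\|M_{-\text{Th}(2sC)}(\text{Th}(2sD))\bigr\|\ \ge\ \frac{2v}{1+v^{2}}.
\]

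To handle the left-hand side I would expand the Möbius transform using the polar decompositions $C=J_C|C|$, $D=J_D|D|$, so that $\text{Th}(tC)=J_C\tanh(t|C|)$ and $\text{Th}(tD)=J_D\tanh(t|D|)$, together with the operator identity $\tanh(2s|C|)=2\tanh(s|C|)\bigl(1+\tanh^{2}(s|C|)\bigr)^{-1}$ and its analogue for $D$; this reexpresses $M_{-\text{Th}(2sC)}(\text{Th}(2sD))$ through $\tanh(s|C|)$, $\tanh(s|D|)$, $J_C$ and $J_D$. The step of extracting the lower bound $2v/(1+v^{2})$ from this expression is where Lemma~\ref{inequality} --- equivalently, the estimate $\|A\|\le\alpha(B,\,A-BA^{*}B)$ --- comes in, applied at the base point $B=\text{Th}(sC)$: one takes unit vectors nearly realising $v$ in $\|M_{-\text{Th}(sC)}(\text{Th}(sD))\|$, pushes them through the doubled transform, and runs the same scalar estimate as in the proof of Lemma~\ref{inequality}, whose crucial ingredient is $\cosh(b-a)\ge1$.

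The commutative model clarifies what must happen. When $K$ is one-dimensional and $C,D$ are unit complex numbers, $\text{Th}(tC)=(\tanh t)C$ and a direct computation gives
\[
\tanh\rho(\text{Th}(sC),\text{Th}(sD))=\frac{(\tanh s)\,|C-D|}{\sqrt{(1-\tanh^{2}s)^{2}+|C-D|^{2}\tanh^{2}s}};
\]
substituting $\tanh 2s=2\tanh s/(1+\tanh^{2}s)$ and clearing denominators, the inequality to be proved reduces to an inequality between two polynomials whose difference equals $(1-\tanh^{2}s)^{4}\,\tanh^{2}(s)\,\bigl(4-|C-D|^{2}\bigr)\ge0$. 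Thus in the disc everything collapses to the triangle inequality $|C-D|\le\|C\|+\|D\|=2$, with equality precisely when $C=-D$, i.e.\ when $\gamma$ and $\eta$ are the same line. The main obstacle in the operator setting is that $J_C,J_D,\tanh(s|C|),\tanh(s|D|)$ do not commute, so this collapse is unavailable; the work lies in organising the estimate --- the application of Lemma~\ref{inequality} at $\text{Th}(sC)$, equivalently the choice of near-extremal vectors --- so that what comes out is the sharp constant $2v/(1+v^{2})$ and not a strictly weaker bound, the equality case $C=-D$ showing there is no slack to spare.
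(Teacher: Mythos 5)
Your reduction to $A=0$ and the reformulation as the endpoint inequality $\|M_{-\text{Th}(2sC)}(\text{Th}(2sD))\|\ge 2v/(1+v^{2})$ with $v=\|M_{-\text{Th}(sC)}(\text{Th}(sD))\|$ are correct, and you have identified the right engine (Lemma~\ref{inequality}) and checked the scalar case. But the central step is missing: you never derive the operator inequality. ``Take unit vectors nearly realising $v$, push them through the doubled transform, and run the same scalar estimate'' is not an argument --- the two M\"obius transforms $M_{-\text{Th}(sC)}$ and $M_{-\text{Th}(2sC)}$ are based at different points, the passage from $\text{Th}(sD)$ to $\text{Th}(2sD)$ goes through the noncommutative doubling map $X\mapsto 2X(1+X^{*}X)^{-1}$, and a vector that nearly norms $M_{-\text{Th}(sC)}(\text{Th}(sD))$ bears no evident relation to one that norms the doubled expression. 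You concede that the commutative collapse is unavailable and that ``the work lies in organising the estimate''; that organisation is precisely the proof, and it is not supplied.

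The idea you are missing is to abandon the global two-point comparison and work infinitesimally with curve lengths. The paper takes the line $C(t)$ of the family $\M$ joining $\gamma(2s)$ to $\eta(2s)$, so that $L(C)=\rho(\gamma(2s),\eta(2s))$, applies the halving map to obtain the curve $C_{1}=\text{Th}\bigl(\tfrac12\text{Th}^{-1}C\bigr)$ joining $\gamma(s)$ to $\eta(s)$, so that $L(C_{1})\ge\rho(\gamma(s),\eta(s))$, and proves the pointwise inequality $2\alpha(C_{1}(t),C_{1}^{\prime}(t))\le\alpha(C(t),C^{\prime}(t))$ for the differential Carath\'eodory metric. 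After computing $C^{\prime}=2(1+C_{1}C_{1}^{*})^{-1}(C_{1}^{\prime}-C_{1}C_{1}^{\prime *}C_{1})(1+C_{1}^{*}C_{1})^{-1}$ and $1-CC^{*}=(1-C_{1}C_{1}^{*})^{2}(1+C_{1}C_{1}^{*})^{-2}$, this pointwise statement becomes exactly Lemma~\ref{inequality} with $B=C_{1}$ and $A=(1-C_{1}C_{1}^{*})^{-1/2}C_{1}^{\prime}(1-C_{1}^{*}C_{1})^{-1/2}$ --- a one-point, one-tangent-vector inequality where the near-extremal-vector estimate applies cleanly. Integrating then gives $\rho(\gamma(2s),\eta(2s))=L(C)\ge 2L(C_{1})\ge 2\rho(\gamma(s),\eta(s))$. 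Without this passage to the infinitesimal level, your plan requires an unproved norm comparison between two globally defined noncommutative expressions, and the proposal as written does not close that gap.
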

\begin{proof}
Since $\rho$ is invariant with respect to the transformations $M_A$ we
may assume
 $A = 0$.


Let $C(t)$ be a  curve $\gamma_{B,E}(t)$ which  joins $\gamma(2s)$
with $\eta(2s)$, we assume that  $C(0) = \gamma(2s)$, $C(t_0) =
\eta(2s)$ for some $t_0>0$ (such curve  exists by
Corollary~\ref{unique}). Define now a new curve $C_1$ by
$$C_1 =
\text{Th}\left(\frac{1}{2}\text{Th}^{-1}C\right).$$ Then  $C_1(0)=\gamma(s)$,
$C_1(t_0)=\eta(s)$ and
\begin{equation}\label{tanh}
C(t)=2C_1(t)(1+C_1(t)^*C_1(t))^{-1}.
\end{equation}
 As usually we denote by $L(C_1)$  the length of the curve
$C_1$: $L(C_1) = \int_0^{t_0}\alpha(C_1(t),C_1^{\prime}(t))dt$.

 If we could
show that
\begin{equation}\label{length}
L(C)\ge 2L(C_1)
\end{equation}
for  all curves $C$, $C_1$ satisfying (\ref{tanh})
then we  would obtain that
$$\rho(\gamma(2s),\eta(2s)) = L(C)\ge 2L(C_1) \ge 2\rho(\gamma(s),\eta(s))$$
(the first equality follows from Proposition~\ref{metricline} and
Proposition~\ref{C:metrcurve},  the last inequality holds because
the length of any curve is not smaller then the distance between
its ends).

 Thus our goal is the inequality (\ref{length}).
It suffices to show that
\begin{equation}\label{alphaineq}
2\alpha(C_1(t),C_1^{\prime}(t))\le \alpha(C(t),C^{\prime}(t)).
\end{equation}
Since $$2C_1 = C(1+C_1^*C_1),$$ we have
$$C^{\prime}(1+C_1^*C_1) + C(C_1^{\prime *}C_1+C_1^*C_1^{\prime}) = 2C_1^{\prime},$$
whence
\begin{equation}\label{Cprime}
C^{\prime} = ((2-CC_1^*)C_1^{\prime} - CC_1^{\prime
*}C_1)(1+C_1^*C_1)^{-1}.
\end{equation}
Since
$$2-CC_1^* = 2-2C_1(1+C_1^*C_1)^{-1}C_1^* = 2(1-C_1C_1^*(1+C_1C_1^*)^{-1}) = 2(1+C_1C_1^*)^{-1},$$
substituting this into (\ref{Cprime}) we obtain
$$C^{\prime} = 2((1+C_1C_1^*)^{-1}C_1^{\prime} - C_1(1+C_1^*C_1)^{-1}C_1^{\prime *}C_1)(1+C_1^*C_1)^{-1}$$
$$= 2(1+C_1C_1^*)^{-1}(C_1^{\prime} - C_1C_1^{\prime *}C_1)(1+C_1^*C_1)^{-1}.$$
Now it follows from Lemma \ref{alpha} that the inequality
(\ref{length}) is equivalent to the following
\begin{equation}\label{reform}\begin{split}
&\|(1-C_1C_1^*)^{-1/2}C_1^{\prime}(1-C_1^*C_1)^{-1/2}\|\\
&\le\|(1-CC^*)^{-1/2}(1+C_1C_1^*)^{-1}(C_1^{\prime}-C_1C_1^{\prime
*}C_1)(1+C_1^*C_1)^{-1}(1-C^*C)^{-1/2}\|.
\end{split}
\end{equation}
But
\[\begin{split}& 1-CC^* = 1-4C_1(1+C_1^*C_1)^{-2}C_1^* = 1- 4
C_1C_1^*(1+C_1C_1^*)^{-2}\\
&=((1+C_1C_1^*)^2-4C_1C_1^*)(1+C_1C_1^*)^{-2} =
(1-C_1C_1^*)^2(1+C_1C_1^*)^{-2}.\end{split}\] Similarly
$$(1-C^*C)^{-1/2} = (1+C_1^*C_1)(1-C_1^*C_1)^{-1}.$$
It follows now that (\ref{reform}) is equivalent to the inequality
\begin{equation}\label{newineq}
\|(1-C_1C_1^*)^{-1/2}C_1^{\prime}(1-C_1^*C_1)^{-1/2}\| \le
\|(1-C_1C_1^*)^{-1}(C_1^{\prime }-C_1C_1^{\prime
*}C_1)(1-C_1^*C_1)^{-1}\|.
\end{equation}
But (\ref{newineq}) follows from Lemma \ref{inequality} by
substituting $B = C_1$ and $A =
(1-C_1C_1^*)^{-1/2}C_1^{\prime}(1-C_1^*C_1)^{-1/2}$ into
inequality (\ref{ineq}).
\end{proof}
\bigskip

The above results establish

\begin{theorem}\label{hypex}
$\B$ is a hyperbolic space.
\end{theorem}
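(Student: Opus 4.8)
The plan is to obtain Theorem~\ref{hypex} by assembling the facts already proved in this appendix. To say that $(\B,\rho)$ is a hyperbolic space in the sense of Section~2, with distinguished family of lines the set $\M$ introduced above, is to verify three things: (a) that each member of $\M$ is genuinely a metric line; (b) condition~(1), that through any two points of $\B$ there passes exactly one line of $\M$; and (c) condition~(2), the convexity inequality \eqref{hyperb1}. Item (a) is Proposition~\ref{metricline}, and item (b) is Corollary~\ref{unique}, so only (c) needs attention; and once \eqref{hyperb1} holds, the stronger inequality \eqref{hyperb2} follows automatically by the dyadic-subdivision-and-continuity argument recorded just after \eqref{hyperb1}.

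For (c), I would fix $x,y,z\in\B$ and set $m_1=\tfrac12 x\oplus\tfrac12 y$, $m_2=\tfrac12 x\oplus\tfrac12 z$, the midpoints of the (unique, well-defined) segments $[x,y]$ and $[x,z]$; the goal is $\rho(m_1,m_2)\le\tfrac12\rho(y,z)$. First I would reduce to the case $x=0$: the M\"obius transformation $M_{-x}$ is a $\rho$-isometry which maps $\M$ onto itself (Lemma~\ref{invbihol}), hence maps segments to segments and midpoints to midpoints, and it sends $x$ to $0$. With $x=0$, the line of $\M$ through $0$ and $y$ can be written as $t\mapsto M_0(\text{Th}(tC))=\text{Th}(tC)$ for a suitable operator $C$ with $\text{Th}(2C)=y$ and $\text{Th}(C)=m_1$: indeed one just rescales the (unit-norm) direction of that line, which only reparametrizes the same subset of $\B$, and one uses that for a metric line $t\mapsto\text{Th}(tD)$ the point at half the parameter is the midpoint (immediate from $\text{Th}(tD)=J\tanh(t|D|)$). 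Likewise the line through $0$ and $z$ is $t\mapsto\text{Th}(tD)$ with $\text{Th}(2D)=z$ and $\text{Th}(D)=m_2$. Applying Lemma~\ref{lem-hyperb} to the two lines $\gamma(t)=\text{Th}(tC)$, $\eta(t)=\text{Th}(tD)$ through $0$ with $s=1$ (the statement and proof of that lemma make no use of $C,D$ having unit norm) then gives $2\rho(m_1,m_2)=2\rho(\gamma(1),\eta(1))\le\rho(\gamma(2),\eta(2))=\rho(y,z)$, which is precisely \eqref{hyperb1}.

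I do not anticipate a real obstacle at this stage: the analytic core has already been dealt with in Lemma~\ref{lem-hyperb}, which itself rests on the norm estimate of Lemma~\ref{inequality}, and the geometry of the family $\M$ is furnished by the earlier lemmas of the appendix. The only slightly delicate point is the final piece of bookkeeping --- recognizing that Lemma~\ref{lem-hyperb}, phrased as a comparison of $\rho(\gamma(s),\eta(s))$ with $\rho(\gamma(2s),\eta(2s))$ for a pair of lines issuing from one point, is nothing but the midpoint inequality \eqref{hyperb1}, once one observes that $\gamma(s)$ is the midpoint of $[\gamma(0),\gamma(2s)]$ and that the parametrization of each line of $\M$ is flexible enough to arrange that the prescribed endpoints $y$ and $z$ are both attained at parameter value $2s$. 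With that identification made, Theorem~\ref{hypex} follows.
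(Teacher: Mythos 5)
Your proposal is correct and follows exactly the route the paper intends: the paper itself merely states that ``the above results establish'' Theorem~\ref{hypex}, and your write-up is precisely the assembly of Proposition~\ref{metricline}, Corollary~\ref{unique}, Lemma~\ref{invbihol} and Lemma~\ref{lem-hyperb} that the authors leave implicit. Your observation that the directions $C,D$ must be rescaled away from unit norm so that both $y$ and $z$ are reached at the same parameter value $2s$, and that Lemma~\ref{lem-hyperb} tolerates this, is the one genuinely needed piece of bookkeeping, and you handle it correctly.
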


\noindent{\bf Acknowledgements.} We wish to express our gratitude
to Professor Itai Shafrir for informing us about results of his
dissertation \cite{Sha} and to Ekaterina Shulman for providing us
with a copy of \cite{Sha} and for helping us with its translation.
The second author also would like to thank Alexei Loginov and
Natal'ya Yaskevich for helpful discussions on the subject of this
paper many years ago.

\end{document}